\numberwithin{equation}{section}
\newcommand*\circled[1]{\tikz[baseline=(char.base)]{
            \node[shape=circle,draw,inner sep=0.5pt] (char) {#1};}}
\definecolor{crimson}{rgb}{0.86, 0.08, 0.24}
\definecolor{darkcyan}{rgb}{0.0, 0.55, 0.55}
\newtheorem{thm}{Theorem}[section]
\newtheorem*{thm*}{Theorem}
\newtheorem*{cor*}{Corollary}
\newtheorem*{prop*}{Proposition}
\newtheorem{cor}[thm]{Corollary}
\newtheorem*{notation*}{Notation}
\newtheorem{example}[thm]{Example}
\newtheorem{defn}[thm]{Definition}
\newtheorem*{defn*}{Definition}
\newtheorem{prop}[thm]{Proposition}
\newtheorem{lem}[thm]{Lemma}
\newtheorem{rem}[thm]{Remark}
\newtheorem{conj}[thm]{Conjecture}
\newtheorem*{conj*}{Conjecture}
\newtheorem*{quest*}{Question}
\newtheorem{quest}[thm]{Question}
\newtheorem{thmx}{Theorem}
\newcommand{\BPG}{BP^{(\!(G)\!)}}
\newcommand{\BPH}{BP^{(\!(H)\!)}}
\newcommand{\BPR}{BP_\mathbb{R}}
\newcommand{\BPCn}{BP^{(\!(C_{2^n})\!)}}
\newcommand{\BPQ}{BP^{(\!(Q_8)\!)}}
\newcommand{\MUR}{MU_\mathbb{R}}
\newcommand{\MUG}{MU^{(\!(G)\!)}}
\newcommand{\MUCn}{MU^{(\!(C_{2^n})\!)}}
\newcommand{\Z}{\underline{\mathbb{Z}}}
\newcommand{\BPCfour}{BP^{(\!(C_{4})\!)}}
\newcommand{\id}{\text{id}}
\newcommand{\E}{\mathcal{E}}
\def\WW{{{\mathbb{W}}}}
\DeclareMathOperator{\Ind}{Ind}
\DeclareMathOperator{\Gal}{Gal}
\DeclareMathOperator{\Aut}{Aut}
\DeclareMathOperator{\SliceSS}{\text{SliceSS}}
\DeclareMathOperator{\LSliceSS}{\text{LSliceSS}}
\DeclareMathOperator{\TateSS}{\text{TateSS}}
\DeclareMathOperator{\HFPSS}{\text{HFPSS}}
\author{Zhipeng Duan}\address{School of mathematical sciences, Nanjing Normal University}\email{zhipeng@njnu.edu.cn}
\author{Guchuan Li}\address{School of Mathematical Sciences, Peking University}\email{liguchuan@math.pku.edu.cn}
\author{XiaoLin Danny Shi}
\address{Department of Mathematics, University of Washington}
\email{dannyshixl@gmail.com}
\title{Vanishing lines in chromatic homotopy theory}
\begin{document}

\begin{abstract}
We show that at the prime 2, for any height $h$ and any finite subgroup $G \subset \mathbb{G}_h$ of the Morava stabilizer group, the $RO(G)$-graded homotopy fixed point spectral sequence for the Lubin--Tate spectrum $E_h$ has a strong horizontal vanishing line of filtration $N_{h, G}$, a specific number depending on $h$ and $G$.  It is a consequence of the nilpotence theorem that such homotopy fixed point spectral sequences all admit strong horizontal vanishing lines at some finite filtration.  Here, we establish specific bounds for them.  Our bounds are sharp for all the known computations of $E_h^{hG}$.  

Our approach involves investigating the effect of the Hill--Hopkins--Ravenel norm functor on the slice differentials.  As a result, we also show that the $RO(G)$-graded slice spectral sequence for $(N_{C_2}^{G}\bar{v}_h)^{-1}BP^{(\!(G)\!)}$ shares the same horizontal vanishing line at filtration $N_{h, G}$.  As an application, we utilize this vanishing line to establish a bound on the orientation order $\Theta(h, G)$, the smallest number such that the $\Theta(h, G)$-fold direct sum of any real vector bundle is $E_h^{hG}$-orientable.  
\end{abstract}

\maketitle

{\hypersetup{linkcolor=black}
\tableofcontents}

\section{Introduction}

\subsection{Motivation and main theorem}\label{subsec:1.1}

Chromatic homotopy theory originated with Quillen's groundbreaking observation of the relationship between the homotopy groups of the complex cobordism spectrum and the Lazard ring \cite{Quillencobordism}.  Subsequently, the work of Miller, Ravenel, and Wilson on periodic phenomena in the stable homotopy groups of spheres \cite{MillerRavenelWilson} and Ravenel's conjectures gave rise to what is now called the chromatic point of view.  This approach is a powerful tool for studying periodic phenomena in the stable homotopy category by analyzing the algebraic geometry of smooth one-parameter formal groups.  The moduli stack of formal groups has a stratification by height, and this stratification serves as an organizing framework for exploring large-scale phenomena in stable homotopy theory. 

Consider the Lubin--Tate spectrum $E(k,\Gamma_h)$ associated with a formal group law $\Gamma_h$ of height $h \geq 1$ over a finite field $k$ of characteristic $p$.  Up to an \'etale extension, these theories depend only on the height.  For the sake of clarity, we will implicitly choose a formal group law $\Gamma_h$ defined over $\mathbb{F}_p$ (i.e. the height-$h$ Honda formal group law) and a field $k$, and write $E_h = E(k, \Gamma_h)$.

The Chromatic Convergence Theorem of Hopkins and Ravenel \cite{ravenelorangebook} shows that the $p$-local sphere spectrum $S^0_{(p)}$ is the homotopy inverse limit of the chromatic tower
\[\cdots \longrightarrow L_{E_h}S^0 \longrightarrow \cdots \longrightarrow L_{E_1}S^0 \longrightarrow L_{E_0}S^0.\]
At each stage of this tower, $L_{E_h}S^0$ is the Bousfield localization of the sphere spectrum with respect to $E_h$.  These localizations can be inductively computed via the chromatic fracture square, which is the homotopy pullback square
\[\begin{tikzcd}
L_{E_h}S^0 \ar[r] \ar[d] & L_{K(h)}S^0 \ar[d] \\
L_{E_{h-1}}S^0 \ar[r] & L_{E_{h-1}}L_{K(h)}S^0.
\end{tikzcd}\]
Here, $K(h)$ is the height-$h$ Morava $K$-theory and $L_{K(h)}S^0$ is the $K(h)$-local sphere. 

Let $\mathbb{S}_h = \Aut_k(\Gamma_h)$, and define $\mathbb{G}_h = \mathbb{S}_h \rtimes \Gal(k/\mathbb{F}_p)$ to be the (big) Morava stabilizer group.  The continuous action of $\mathbb{G}_h$ on $\pi_* E_h$ can be refined to a unique $\mathbb{E}_\infty$-action of $\mathbb{G}_h$ on $E_h$ \cite{HopkinsMiller, goersshopkinsaction, LurieElliptic2}.  Devinatz and Hopkins \cite{devinatzHopkins} showed that $L_{K(h)}S^0 \simeq E_h^{h\mathbb{G}_h}$.  Furthermore, the $K(h)$-local $E_h$-based Adams spectral sequence for $L_{K(h)}S^0$ can be identified with the $\mathbb{G}_h$-homotopy fixed point spectral sequence for $E_h$: 
\[\E_2^{s, t} = H_c^s(\mathbb{G}_h, \pi_t E_h) \Longrightarrow \pi_{t-s} L_{K(h)}S^0.\]
Henn \cite{hennfiniteresolution} proposed that the $K(h)$-local sphere $L_{K(h)}S^0$ can be built up from spectra of the form $E_h^{hG}$, where $G$ is a finite subgroup of $\mathbb{G}_h$.  This construction has been explicitly realzied at heights 1 and 2 \cite{ GoerssK2-local, hennfiniteresolution,Agnesalgebraicduality, IrinaTopologicalresolution, henncentralrsolution}. 

From this point of view, the spectra $E_h^{hG}$ serve as the fundamental building blocks of the $p$-local stable homotopy category.  The homotopy groups $\pi_* E_h^{hG}$ also play a crucial role in detecting important families of elements in the stable homotopy groups of spheres \cite{Ravenel_arf, HHR, LiShiWaXu, BehrensMahowaldQuigley}.  Computation of these homotopy groups and understanding their Hurewicz images are central topics in chromatic homotopy theory. 

In this paper, we focus our attention at the prime $p=2$. Historically, describing the explicit action of $\mathbb{G}_h$ on $E_h$ has been challenging.  This limited our computations to heights 1 and 2 until the recent equivariant computational techniques introduced by Hill, Hopkins, and Ravenel \cite{HHR} (norms of Real bordism and the equivariant slice spectral sequence) and by Hahn and Shi \cite{realorientationDanny} (Real orientation).  These new techniques allowed us to compute $E_h^{hC_2}$ for all heights $h \geq 1$ \cite{realorientationDanny} and $E_4^{hC_4}$ \cite{HSWXC_4} at height 4.

The finite subgroups of $\mathbb{S}_h$ and $\mathbb{G}_h$ have been classified in \cite{HewettSubgroupMorava, Hewett2, bujard2012finite}.  To summarize this classification at the prime 2, let $h = 2^{n-1}m$, where $m$ is an odd number.  If $n\neq 2$, the maximal finite $2$-subgroups of $\mathbb{S}_h$ are isomorphic to $C_{2^n}$, the cyclic group of order $2^n$.  When $n=2$, the maximal finite $2$-subgroups of $\mathbb{S}_h$ are isomorphic to the quaternion group $Q_8$. Furthermore, the group $\mathbb{G}_h$ contains a subgroup of order two, corresponding to the automorphism $[-1]_{\Gamma_h}(x)$ of $\Gamma_h$.  This $C_2$-subgroup is central in $\mathbb{G}_h$.  All the finite subgroups $G \subset \mathbb{G}_h$ we consider in this paper will contain this central $C_2$-subgroup.  

To state our main result, note that based on the classification provided above, for any $G \subset \mathbb{G}_h$ a finite subgroup, a 2-Sylow subgroup $H$ of $G \cap \mathbb{S}_h$ is isomorphic to either $C_{2^n}$ or $Q_8$.

\begin{defn} \label{def:NhG}\rm
For $h > 0$ and $G \subset \mathbb{G}_h$ a finite subgroup, let $H$ be a 2-Sylow subgroup of $K = G \cap \mathbb{S}_h$.  Define $N_{h,G}$ to be the positive integer $N_{h,H}$, where 
\begin{align*}
N_{h, C_{2^n}} &:= 2^{h+n} - 2^n + 1, \\
N_{h, Q_8} &:= 2^{h+3}-7.
\end{align*}
\end{defn}

The main result of this paper is the following: 

\begin{thmx}[Horizontal Vanishing Line]\label{thm:introThm1}
For any height $h$ and any finite subgroup $G \subset \mathbb{G}_h$, there is a strong horizontal vanishing line of filtration $N_{h,G}$ in the $RO(G)$-graded homotopy fixed point spectral sequence for $E_h$. 
\end{thmx}

Recall that having a strong horizontal vanishing line of filtration $N_{h, G}$ means that the spectral sequence collapses after the $\E_{N_{h, G}}$-page, with no surviving elements of filtration greater than or equal to $N_{h, G}$ at the $\E_\infty$-page.

The motivation behind \cref{thm:introThm1} is as follows: classically, the Nilpotence Theorem of Devinatz, Hopkins, and Smith \cite{Nilpotence1, Nilpotence2} ensures that the homotopy fixed point spectral sequences of the Lubin--Tate theories $E_h$ all have strong horizontal vanishing lines at \textit{some} finite filtration (see \cite[Section~5]{devinatzHopkins} and \cite[Section~2.3]{BeaudryGoerssHenn}).  While theoretically valuable, this existence result alone cannot be used for computations.  Without knowledge of the specific location of the vanishing line, it cannot aid in proving specific differentials.

The recent computations by Hill, Shi, Wang, and Xu have demonstrated the utility of having a bound for the strong horizontal vanishing line in equivariant computations of Lubin--Tate theories.  In their work \cite{HSWXC_4}, they first re-analyzed the slice spectral sequence for $\BPCfour \langle 1 \rangle$ (a connective model of $E_2$ with a $C_4$-action), and established a horizontal vanishing line of filtration 16.  They also proved that every class on or above this line must vanish on or before the $\E_{13}$-page \cite[Theorem~3.17]{HSWXC_4}.  This result allowed them to provide a more concise proof of all the Hill--Hopkins--Ravenel slice differentials presented in \cite{HHRC_4}. 

In the subsequent case, when studying the slice spectral sequence for $\BPCfour \langle 2 \rangle$ (a connective model of $E_4$ with a $C_4$-action), a similar phenomenon was observed. There exists a horizontal vanishing line at filtration 96, and every class situated on or above this line must vanish on or before the $\E_{61}$-page.  This theorem is referred to as the Vanishing Theorem \cite[Theorem~9.2]{HSWXC_4}, and it serves as a crucial tool in establishing many of the higher slice differentials.  

The strong vanishing lines established in \cref{thm:introThm1} will significantly facilitate future computations involving Lubin--Tate theories and norms of Real bordism theories.

\subsection{Main results and outline of the paper}
We will now give a more detailed summary of our results and describe the contents of this paper. 

In \cref{section:preliminaries}, we recall some basic facts of our spectral sequences of interest. The classical Tate diagram induces a Tate diagram of spectral sequences 
\[\xymatrix{
    \text{HOSS(X)}\ar[r]\ar[d]^{=}&\text{SliceSS}(X)\ar[d] \ar[r]& \text{LSliceSS}(X)\ar[d]\\
    \text{HOSS}(X)\ar[r]& \text{HFPSS}(X)\ar[r]&\text{TateSS}(X).
    }
  \]
  The interactions between these spectral sequences will be crucial for proving our main theorem. 

We will also recall the spectrum $\BPG$, its slice filtration, and some special classes on the $\E_2$-page of its slice spectral sequence.  We prove all the differentials in the $C_2$-slice spectral sequence for $i_{C_2}^*\BPG$ when $G = C_{2^n}$ and $Q_8$ (\cref{thm:slicediffthm}).  While not stated elsewhere, this is a straightforward consequence of \cite[Theorem~9.9]{HHR}. 

In \cref{sec:Comparison}, we prove comparison theorems between the slice spectral sequence, the homotopy fixed point spectral sequence, and the Tate spectral sequence.  These comparisons are based on the maps
\[\SliceSS(X) \longrightarrow \HFPSS(X) \longrightarrow \TateSS(X)\]
extracted from the Tate diagram of spectral sequences above.  It is worth noting that prior works by Ullman \cite{Ullman} and B\"{o}ckstedt--Madsen \cite{IbMadsenTate} have shown that both maps induce isomorphisms within specific ranges in the integer-graded spectral sequence.  For our purposes, we extend these isomorphism regions to the $RO(G)$-graded pages.  
\begin{thmx}[\cref{defn:tauV} and \cref{prop:isoslicehfp}] \label{thm:introThm2}
For $V \in RO(G)$, let 
\[\tau(V): = \min_{\{e\} \subsetneq H \subset G} |H| \cdot \dim V^H.\]  
The map from the $RO(G)$-graded slice spectral sequence to the $RO(G)$-graded homotopy fixed point spectral sequence induces an isomorphism on the $\E_2$-page for pairs $(V, s)$ that satisfy the inequality 
\[\tau(V-s-1) > |V|.\]
Furthermore, this map induces a one-to-one correspondence between the differentials within this isomorphism region.  
\end{thmx}

The proof of \cref{thm:introThm2} relies on the main result in Hill--Yarnall \cite[Theorem~A]{HillYarnall}, which establishes a relationship between the slice connectivity of an equivariant spectrum and the connectivity of its geometric fixed points.  

As for the map from the homotopy fixed point spectral sequence to the Tate spectral sequence, the classical analysis almost generalizes immediately to give an $RO(G)$-graded isomorphism region.  

\begin{thmx}[\cref{prop:isohfptate}] \label{thm:introThm3}
The map from the $RO(G)$-graded homotopy fixed point spectral sequence to the $RO(G)$-graded Tate spectral sequence induces an isomorphism on the $\E_2$-page for classes in filtrations $s >0$, and a surjection for classes in filtration $s = 0$.  Furthermore, there is a one-to-one correspondence between differentials whose sources are of nonnegative filtration. 
\end{thmx}

In \cref{sec:NormStructure}, we give a brief summary of the norm structure in equivariant spectral sequences.  This structure plays a pivotal role in deducing the fate of specific classes in the $G$-equivariant spectral sequence based on information from the $H$-equivariant spectral sequence, where $H \subset G$ is a subgroup (\cref{thm:normdiff}). 

In \cref{sec:TateVanishing}, we analyze the Tate spectral sequence for $E_h$ and prove the following theorem.
 
\begin{thmx}[Tate Vanishing, \cref{theorem:TateVanishing}] \label{thm:introThm4}
For any height $h$ and any finite subgroup $G \subset \mathbb{G}_h$, all the classes in the $RO(G)$-graded Tate spectral sequence for $E_h$ vanish after the $\E_{N_{h, G}}$-page.  Here, $N_{h, G}$ is defined as in \cref{def:NhG}. 
\end{thmx}

Note that at any prime $p$, Mathew and Meier have shown that the map $E_h^{hG} \to E_h$ is a faithful $G$-Galois extension whenever $G \subset \mathbb{G}_h$ is a finite subgroup \cite[Example~6.2]{MathewMeier}.  This implies that the Tate spectrum $E_h^{tG}$ is contractible \cite[Proposition~6.3.3]{RognesGalois}.  Consequently, all the classes in the Tate spectral sequence for $E_h$ must eventually vanish.  \cref{thm:introThm4} provides a concrete bound for the page number at which this vanishing occurs when $p = 2$.

To prove \cref{thm:introThm4}, we use the $G$-equivariant orientation from $\BPG$ to $E_h$, as given by \cite{realorientationDanny}.  This orientation map factors through $(N_{C_2}^G \bar{v}_h)^{-1} \BPG$: 
\[\begin{tikzcd}
\BPG \ar[r] \ar[d] & E_h \\ 
(N_{C_2}^G \bar{v}_h)^{-1} \BPG \ar[ru]
\end{tikzcd}\]
This induces a map of the corresponding Tate spectral sequences: 
\[G\text{-}\TateSS((N_{C_2}^G \bar{v}_h)^{-1} \BPG) \longrightarrow G\text{-}\TateSS(E_h). \]
Equipped with the results discussed in the previous sections, we first transport the differentials from the $C_2$-slice spectral sequence for $i_{C_2}^*(N_{C_2}^G \bar{v}_h)^{-1}\BPG$ to the $C_2$-Tate spectral sequence for $i_{C_2}^*(N_{C_2}^G \bar{v}_h)^{-1}\BPG$ using the one-to-one correspondences established in \cref{sec:Comparison}.  We then use the norm structure to deduce that the unit class in the $G$-Tate spectral sequence for $(N_{C_2}^G \bar{v}_h)^{-1}\BPG$ must be killed on or before the $\E_{N_{h, G}}$-page.  By naturality, the unit class in the $G$-Tate spectral sequence for $E_h$ must also be killed on or before the $\E_{N_{h, G}}$-page.  This leads to the vanishing of all other classes beyond this point by the multiplicative structure. 

Our proof of \cref{thm:introThm4} applies in general to give a similar vanishing theorem for any $(N_{C_2}^G \bar{v}_h)^{-1}\BPG$-module. 
\begin{cor} [\cref{rem:TateVanishingModule}] \label{thm:introThm4.5}
Let $M$ be a $(N_{C_2}^G \bar{v}_h)^{-1}\BPG$-module.  All the classes in the $RO(G)$-graded Tate spectral sequence for $M$ vanish after the $\E_{N_{h, G}}$-page. 
\end{cor}

In \cref{sec:HFPSSVanishing}, we analyze the homotopy fixed point spectral sequence for $E_h$ and prove \cref{thm:introThm1} (\cref{thm:vanishingallfinite}).  The proof of \cref{thm:introThm1} is by using the comparison theorem (\cref{thm:introThm3}) between the homotopy fixed point spectral sequence and the Tate spectral sequence, combined with the Tate vanishing theorem (\cref{thm:introThm4}) in the Tate spectral sequence.  Our proof also applies to show that the same strong horizontal vanishing line exists in the homotopy fixed point spectral sequence for any $(N_{C_2}^G \bar{v}_h)^{-1} \BPG$-module. 

\begin{cor}[\cref{rem:HFPSSVanishingModule}]
For any $(N_{C_2}^G \bar{v}_h)^{-1} \BPG$-module $M$, there is a strong horizontal vanishing line of filtration $N_{h, G}$ in the $RO(G)$-graded homotopy fixed point spectral sequence for $M$.
\end{cor}

\begin{cor}[Uniform Vanishing, \cref{cor:uniformvanishing}]
For any $K(h)$-local finite spectrum $Z$, the homotopy fixed point spectral sequence 
\[H^s(G, E_tZ) \Longrightarrow \pi_{t-s}(E^{hG} \wedge Z)\]
has a strong horizontal vanishing line of filtration $N_{h, G}$. 
\end{cor}

In \cref{sec:VanishingSlice}, we prove the existence of horizontal vanishing lines in the slice spectral sequence.  

\begin{thmx}[\cref{thm:sliceVanishing}] \label{thm:introThm6}
When $G = C_{2^n}$ or $Q_8$, the $RO(G)$-graded slice spectral sequence for any $(N_{C_2}^G \bar{v}_h)^{-1}\BPG$-module $M$ admits a horizontal vanishing line of filtration $N_{h, G}$.  
\end{thmx}

In particular, \cref{thm:introThm6} implies that there will be a horizontal vanishing line of filtration 121 in the $C_8$-slice spectral sequence for $\Omega_\mathbb{O}$, the detection spectrum of Hill--Hopkins--Ravenel that detects all the Kervaire invariant one elements \cite{HHR}.

It is interesting to note that when $G = Q_8$, even though there is no knowledge of the slice filtration of $\BPQ$ yet, \cref{thm:introThm6} still applies to show that the slice spectral sequences of $(N_{C_2}^{Q_8}\bar{v}_h)^{-1} \BPQ$-modules all have horizontal vanishing lines of filtration $N_{h, Q_8}$.

Finally, in \cref{section:application}, we present an application of \cref{thm:introThm1} in the study of $E_h^{hG}$-orientations of real vector bundles.  For $h \geq 1$ and $G \subseteq \mathbb{G}_h$ a finite subgroup, let $\Theta(h, G)$ be the smallest number $d$ such that the $d$-fold direct sum of any real vector bundle is $E_h^{hG}$-orientable.  At the prime $p = 2$ and $G = C_2$, Kitchloo and Wilson \cite{KitchlooWilson} have studied $E_h^{hC_2}$-orientations.  When $G = C_p$, Bhattacharya and Chatham \cite{Hoodoriented} have studied $E_{k(p-1)}^{hC_p}$-orientations at all primes.   

\begin{thmx}[\cref{theorem:orientationC2m}]
For any height $h$ and any finite subgroup $G\subset \mathbb{G}_h$, let $K=G\cap \mathbb{S}_h$, $H$ be a $2$-Sylow subgroup of $K$, and define $d= 2\cdot|K|\cdot |H|^{\frac{N_{h,H}-1}{2}}$. Then the $d$-fold direct sum of any real vector bundle is $E_h^{hG}$-orientable.
\end{thmx}

\subsection{Open questions and further directions}
\subsubsection*{Sharpness of the strong horizontal vanishing lines}

For all known computations, the bounds established in \cref{thm:introThm1} for the strong horizontal vanishing lines are sharp when the 2-Sylow subgroup of $K = G \cap \mathbb{S}_h$ is cyclic.  More specifically, when $G = C_2$, the strong horizontal vanishing line in the homotopy fixed point spectral sequence for $E_h^{hC_2}$ is at filtration exactly $2^{h+1}-1$.  When $G = C_4$, the strong horizontal vanishing lines in the homotopy fixed point spectral sequences for $E_2^{hC_4}$ and $E_4^{hC_4}$ are at filtrations exactly $13$ and $61$. 

When the 2-Sylow subgroup of $K$ is isomorphic to $Q_8$, Bauer's computation of $\texttt{tmf}$ \cite{bauertmf} implies that the strong horizontal vanishing line in the $Q_8$-homotopy fixed point spectral sequence for $E_2$ is at filtration 23.  This value is lower than the bound provided by our theorem, which is 25.  In \cite{DuanKongLiLuWangQ8}, the value $N_{h, Q_8}$ has been further reduced from $2^{h+3}-7$ to $2^{h+3}-9$.  \cref{thm:introThm1}, combined with this new improvement, yields the sharpest bounds for the strong horizontal vanishing lines across all known computations.

\begin{conj} \label{conj:introConjBoundSharp}
The bounds established in \cref{thm:introThm1} for the strong horizontal vanishing lines are sharp.
\end{conj}

Devinatz and Hopkins \cite{devinatzHopkins} have also proved that for the big Morava stabilizer group $\mathbb{G}_h$, the homotopy fixed point spectral sequence for $E_h^{h\mathbb{G}_h}$ admits a strong vanishing line at some finite filtration.

\begin{conj} \label{conj:BigGhVanishingLine}
The homotopy fixed point spectral sequence for $E_h^{h\mathbb{G}_h}$ admits a strong vanishing line at filtration $(h^2+N)$, where $N:=\max\{N_{h,G}\mid G \subset \mathbb{G}_h \text{ finite}\}.$
\end{conj}

An intuitive reason for the bound $(h^2 + N)$ in \cref{conj:BigGhVanishingLine} is as follows: by the philosophy of finite resolutions, there should be a resolution of $E_h^{h\mathbb{G}_h}$ built from the finite fixed points $\{E_h^{hG} \mid G \subset \mathbb{G}_h \text{ finite}\}$, and this resolution should have length $h^2$ because this number is the virtual cohomological dimension of $\mathbb{S}_h$.  Analyzing the associated tower of spectral sequences produces the conjectural bound. 

\subsubsection*{Odd primes}

\begin{quest}
At odd primes, what is the filtration of the strong horizontal vanishing line in the homotopy fixed point spectral sequence for $E_h^{hG}$?
\end{quest}

Note that, as a consequence of the classification of finite subgroups of $\mathbb{S}_h$ at odd primes \cite{HewettSubgroupMorava}, when $h = p^{n-1}(p-1)m$, there is a cyclic subgroup of order $p^n$ in $\mathbb{S}_h$.  The authors believe that once a comprehensive understanding of the $C_p$-homotopy fixed point spectral sequence for $E_h$ is achieved, the arguments presented in this paper can be employed analogously to establish a bound for the strong horizontal vanishing line in $\HFPSS(E_h^{hG})$ that is applicable to any height $h$ and any finite subgroup $G \subset \mathbb{G}_h$ containing $C_p$. 

\subsubsection*{Horizontal vanishing lines for connective theories}

When $G = C_{2^n}$, the Hill--Hopkins--Ravenel quotient $(N_{C_2}^G\bar{v}_h)^{-1}\BPCn \langle m \rangle$ is a $(N_{C_2}^G\bar{v}_h)^{-1}\BPCn$-module, and there is a horizontal vanishing line in its $RO(C_{2^n})$-graded slice spectral sequence at filtration $N_{h, C_{2^n}}$ by \cref{thm:introThm6}.

However, without inverting the class $(N_{C_2}^G \bar{v}_h)$, there is no horizontal vanishing line in the $RO(C_{2^n})$-graded slice spectral sequence for the connective theory $\BPCn \langle m \rangle$.  This is because we have elements of arbitrarily high filtrations on the $\E_\infty$-page.  For example, the tower $\{a_\sigma^k \mid k \geq 1\}$ contains classes of arbitrarily high filtrations that survive to the $\E_\infty$-page.

Interestingly, computations of $\texttt{tmf}$, $\BPR\langle n \rangle$, $\BPCfour \langle 1 \rangle$, and $\BPCfour \langle 2 \rangle$ suggest the presence of horizontal vanishing lines in the integer-graded slice spectral sequence for the connective theories \cite{bauertmf, HuKriz, HHRC_4, HSWXC_4}, with filtrations matching the filtrations for the vanishing lines of the periodic theories.   

\begin{conj}
There is a horizontal vanishing line of filtration $N_{h, C_{2^n}}$ in the integer-graded slice spectral sequence for $\BPCn \langle m \rangle$.
\end{conj}

\subsection{Acknowledgements}
The authors would like to thank Agn\`{e}s Beaudry, Prasit Bhattacharya, Hood Chatham, Paul Goerss, Mike Hill, Tyler Lawson, Yunze Lu, Peter May, Zhouli Xu, Mingcong Zeng, and Foling Zou for helpful conversations.  We would like to thank Guozhen Wang for comments on an earlier draft of our paper and answering our numerous questions.  We would also like to thank the anonymous referee for the many helpful comments and suggestions.  The third author is supported in part by NSF
Grant DMS-2313842.

\section{Preliminaries}\label{section:preliminaries}

In this section, we will discuss the spectral sequences that are of interest to us.  We will also collect certain facts about these spectral sequences that we will need in the later sections.  

Let $X$ be a $G$-spectrum, and let $P^{\bullet}X$ be the slice tower of $X$.  The Tate diagram 
\begin{equation*}
    \begin{gathered}\xymatrix{EG_+\wedge X \ar[r]\ar[d]^{\simeq}& X\ar[r]\ar[d] & \widetilde{E}G\wedge X \ar[d]\\
    EG_+\wedge F(EG_+,X)\ar[r]& F(EG_+,X)\ar[r]  & \widetilde{E}G\wedge F(EG_+,X)
    }
    \end{gathered}
\end{equation*}
induces a diagram of towers: 
\begin{equation*}
    \begin{gathered}\xymatrix{EG_+\wedge P^{\bullet}X \ar[r]\ar[d]^{\simeq}& P^{\bullet}X\ar[r]\ar[d] & \widetilde{E}G\wedge P^{\bullet}X \ar[d]\\
    EG_+\wedge F(EG_+,P^{\bullet}X)\ar[r]& F(EG_+,P^{\bullet}X)\ar[r]  & \widetilde{E}G\wedge F(EG_+,P^{\bullet}X).
    }
    \end{gathered}
\end{equation*}

This diagram of towers further induces a Tate diagram of spectral sequences 
\begin{equation} \label{diag:TateDiagramSS}
    \begin{gathered}\xymatrix{
    \text{HOSS(X)}\ar[r]\ar[d]^{=}&\text{SliceSS}(X)\ar[d]^{\circled{1}}\ar[r]& \text{LSliceSS}(X)\ar[d]\\
    \text{HOSS}(X)\ar[r]& \text{HFPSS}(X)\ar[r]^{\circled{2}} &\text{TateSS}(X).
    }
     \end{gathered}
\end{equation}

All the spectral sequences in (\ref{diag:TateDiagramSS}) are $RO(G)$-graded spectral sequences.  We pause to briefly discuss notations:
\begin{enumerate}
\item The spectral sequence associated with the tower $\{EG_+ \wedge P^\bullet X\}$ is the \textit{homotopy orbit spectral sequence} (HOSS) of $X$.  It is a third and fourth quadrant spectral sequence, and it converges to $\underline{\pi}_\star EG_+ \wedge X$.  In the integer-graded page at the $(G/G)$-level, the spectral sequence converges to $\pi_*^G EG_+ \wedge X = \pi_*X_{hG}$.

\item The spectral sequence associated with the tower $\{P^\bullet X\}$ is the \textit{slice spectral sequence} (SliceSS) of $X$.  It is a first and third quadrant spectral sequence, and it converges to $\underline{\pi}_\star X$.  In the integer graded page at the $(G/G)$-level, the spectral sequence converges to $\pi_*^G X = \pi_* X^G$. 

\item Following the treatment of \cite{MeierShiZengSegal}, the spectral sequence associated with the tower $\{\widetilde{E}G \wedge P^{\bullet}X\}$ is called the \textit{localized slice spectral sequence for $X$} and is denoted by $\LSliceSS(X)$.  It converges to $\underline{\pi}_\star \widetilde{E}G \wedge X$. 

\item The spectral sequence associated with the tower $\{F(EG_+, P^\bullet X)\}$ is the \textit{homotopy fixed point spectral sequence} (HFPSS) of $X$.  It is a first and second quadrant spectral sequence, and it converges to $\underline{\pi}_\star F(EG_+, X)$.  In the integer-graded page at the $(G/G)$-level, the spectral sequence converges to $\pi_*^G F(EG_+, X) = \pi_* X^{hG}$. 

\item The spectral sequence associated with the tower $\{\widetilde{E}G \wedge F(EG_+, P^\bullet X)\}$ is the \textit{Tate spectral sequence} (TateSS) of $X$.  It has classes in all four quadrants, and it converges to $\underline{\pi}_\star \widetilde{E}G \wedge F(EG_+, X)$.  In the integer-graded page at the $(G/G)$-level, the spectral sequence converges to $\pi_*^G\widetilde{E}G \wedge F(EG_+, X) = \pi_* X^{tG}$.
\end{enumerate}

Let $\rho_2$ denote the regular $C_2$-representation.  In \cite{BHSZModel}, it is shown that there are generators 
\[\bar{t}_i \in \pi_{(2^i-1)\rho_{2}}^{C_2} \BPCn \]
such that 
\[\pi_{*\rho_2}^{C_2} \BPCn \cong \mathbb{Z}_{(2)} [C_{2^n} \cdot \bar{t}_1, C_{2^n} \cdot \bar{t}_2, \ldots, ].\]
For a precise definitions of these generators, see formula (1.3) in \cite{BHSZModel} (also see \cite[Section~5]{HHR} for analogous generators in $\pi_{*\rho_2}^{C_2}\MUCn$).  For $\BPR$, we will denote the $\bar{t}_i$-generators as $\bar{v}_i$, as their restrictions give a set of generators $v_i \in \pi_{2(2^i-1)}BP$ for $\pi_*BP$. 

Similar to the treatment of $\MUCn$ in \cite{HHR}, we can build an equivariant refinement 
\[S^0[C_{2^n} \cdot \bar{t}_1, C_{2^n} \cdot \bar{t}_2, \ldots] \longrightarrow \BPCn\]
from which we can apply the Slice Theorem \cite[Theorem~6.1]{HHR} to show that the slice associated graded of $\BPCn$ is the graded spectrum 
\[H\Z[C_{2^n} \cdot \bar{t}_1, C_{2^n} \cdot \bar{t}_2, \ldots].\]
Here, the degree of a summand corresponding to a monomial in the $\bar{t}_i$-generators and their conjugates is the underlying degree.  

As a consequence, the slice spectral sequence for the $RO(C_{2^n})$-graded homotopy groups of $\BPCn$ has $\E_2$-term the $RO(C_{2^n})$-graded homotopy of $H\Z[C_{2^n} \cdot \bar{t}_1, C_{2^n} \cdot \bar{t}_2, \ldots]$.  To compute this, note that $S^0[C_{2^n} \cdot \bar{t}_1, C_{2^n} \cdot \bar{t}_2, \ldots]$ can be decomposed into a wedge sum of slice cells of the form 
\[{C_{2^n}}_+ \wedge_{H_p} S^{\frac{|p|}{|H_p|}\rho_{H_p}},\]
where $p$ ranges over a set of representatives for the orbits of monomials in the $\gamma^j \bar{t}_i$-generators, and $H_p \subset C_{2^n}$ is the stabilizer of $p \pmod{2}$.  Therefore, it suffices to compute the equivariant homology groups of the representations spheres $S^{\frac{|p|}{|H_p|}\rho_{H_p}}$ with coefficients in the constant Mackey functor $\Z$. 

We recall some distinguished elements in the $RO(G)$-graded homotopy groups that we will need in order to name the relevant classes on the $\E_2$-page of the slice spectral sequence (see \cite[Section~3.4]{HHR} and \cite[Section~2.2]{HSWXC_4}).  

\begin{defn}\rm
Let $V$ be a $G$-representation.  We will use $a_{V}:S^0\rightarrow S^{V}$ to denote its \textit{Euler class}.  This is an element in $\pi_{-V}^GS^0$.  We will also denote its Hurewciz image in $\pi_{-V}^G H\underline{\mathbb{Z}}$ by $a_V$. 
\end{defn}
If the representation $V$ has nontrivial fixed points (i.e. $V^{G}\neq \{0\}$), then $a_V=0$.  Moreover, for any two $G$-representations $V$ and $W$, we have the relation $a_{V\oplus W}=a_{V}a_{W}$ in $\pi^{G}_{-V-W}(S^0)$.

\begin{defn}\rm
Let $V$ be an oriented $G$-representation.  Then the orientation for $V$ gives an isomorphism $H_{|V|}^{G} (S^V; \underline{\mathbb{Z}}) \cong \mathbb{Z}$.  In particular, the restriction map 
\[
H^{G}_{|V|}(S^V,\underline{\mathbb{Z}})\longrightarrow H_{|V|}(S^{|V|},\mathbb{Z})
\]
is an isomorphism.  Let $u_V\in H_{|V|}^{G}(S^V; \underline{\mathbb{Z}})$ be the generator that maps to $1$ under this restriction isomorphism.  The class $u_V$ is called the \textit{orientation class} of $V$.
\end{defn}

The orientation class $u_V$ is stable in $V$.  More precisely, if 1 is the trivial representation, then $u_{V\oplus 1}=u_{V}$.  Moreover, if $V$ and $W$ are two oriented $G$-representations, then $V \oplus W$ is also oriented, and $u_{V\oplus W}=u_{V}u_{W}$.  

The Euler class $a_V$ and the orientation class $u_V$ behave well with respect to the Hill--Hopkins--Ravenel norm functor.  More precisely, for $H \subset G$ a subgroup and $V$ a $H$-representation, we have the equalities
\begin{align}
N_H^G(a_V)&=a_{\Ind V} \label{eq:normavuv1}\\ 
u_{\Ind |V|}N_H^G (u_V)&=u_{\Ind V} \label{eq:normavuv2} 
\end{align}
where $\Ind V=\Ind_H^G V$ is the induced representation.

When $G = C_{2^n}$, let $\lambda_{i}$, $1 \leq i \leq n$ denote the 2-dimensional real $C_{2^n}$-representation corresponding to rotation by $\left(\frac{2\pi}{2^{i}}\right)$.  In particular, when $i = 1$, the representation $\lambda_1$ corresponds to rotation by $\pi$ and thus equals to $2\sigma$, where $\sigma$ is the real sign representation of $C_{2^n}$.  When localized at 2, the representations that will be relevant to us are $1$, $\sigma$, $\lambda_2$, $\lambda_3$, $\ldots$, $\lambda_n$.

When $G = Q_8$, $RO(Q_8) = \mathbb{Z}\{1, \sigma_i, \sigma_j, \sigma_k, \mathbb{H}\}$.  The representations $\sigma_i$, $\sigma_j$, and $\sigma_k$ are one-dimensional representations whose kernels are $\langle i \rangle$, $\langle j \rangle$, and $\langle k \rangle$, respectively.  The representation $\mathbb{H}$ is a four-dimensional irreducible representation, obtained by the action of $Q_8$ on the quaternion algebra $\mathbb{H} = \mathbb{R} \oplus \mathbb{R}i \oplus \mathbb{R}j \oplus \mathbb{R}k$ by left multiplication.

For $h \geq 1$, let $\bar{v}_h \in \pi_{(2^h-1)\rho_2}^{C_2} \BPG$ denote the images of $\bar{v}_h$-generators under the map 
\[\BPR \longrightarrow i_{C_2}^* \BPG,\]
which is inclusion into the first factor.  The following theorem describes all the differentials in the slice spectral sequence for $i_{C_2}^* \BPG$.  

\begin{thm}\label{thm:slicediffthm}
Let $G = C_{2^n}$ or $Q_8$.  In the $C_2$-slice spectral sequence for $i^{*}_{C_2}\BPG$, the differentials are generated under multiplicative structures by the differentials 
\[d_{2^{h+1}-1}(u_{2\sigma_2}^{2^{h-1}})=\bar{v}_h a_{\sigma_2}^{2^{h+1}-1}, \,\,\, h \geq 1.\]
\end{thm}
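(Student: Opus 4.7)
The plan is to apply the Hill--Hopkins--Ravenel slice differential theorem (\cite[Theorem~9.9]{HHR}) to $\BPR$ and transport the resulting differentials along the $C_2$-equivariant ring map $\BPR \to i_{C_2}^*\BPG$ (which realizes $\bar{v}_h \in \pi^{C_2}_{(2^h-1)\rho_2}\BPG$ as the image of $\bar{v}_h \in \pi^{C_2}_{(2^h-1)\rho_2}\BPR$), then propagate using the multiplicative structure of the slice spectral sequence.

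First I would identify the $E_2$-page. Since $C_2$ is central in both $C_{2^m}$ and $Q_8$, the restriction $i_{C_2}^*\BPG$ is equivalent as a $C_2$-spectrum to a smash product of $|G|/2$ copies of $\BPR$ indexed by cosets of $C_2$ in $G$. Applying the slice theorem in this setting---or equivalently using the equivariant refinement and decomposition of slice cells described in the preliminaries---the slice associated graded of $i_{C_2}^*\BPG$ as a $C_2$-spectrum is a wedge of $H\underline{\mathbb{Z}}$-module slice cells indexed by monomials in the $\bar{t}_i$-generators and their Galois conjugates. Consequently, the $RO(C_2)$-graded $E_2$-page is multiplicatively generated by the $\bar{v}_h$ (and their Galois conjugates $\gamma^j\bar{v}_h$), together with the Euler class $a_{\sigma_2}$ and the orientation class $u_{2\sigma_2}$. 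Among these, the $\bar{v}_h$, their conjugates, and $a_{\sigma_2}$ are all detected by genuine equivariant homotopy classes and hence are permanent cycles; the only multiplicative generators that can support nontrivial differentials are the powers of $u_{2\sigma_2}$.

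By HHR Theorem~9.9 applied to $\BPR$, the differential $d_{2^{h+1}-1}(u_{2\sigma_2}^{2^{h-1}}) = \bar{v}_h a_{\sigma_2}^{2^{h+1}-1}$ holds in the $C_2$-slice spectral sequence of $\BPR$ for each $h \geq 1$. Naturality along the ring map $\BPR \to i_{C_2}^*\BPG$ transports this differential to the $C_2$-slice spectral sequence of $i_{C_2}^*\BPG$, and the Leibniz rule together with the permanent-cycle status of the remaining generators then determines all other differentials.

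The main obstacle is reconciling the Galois conjugates: naturality applied to each inclusion $\BPR \xrightarrow{\gamma^j} \BPR \to i_{C_2}^*\BPG$ formally gives the differentials $d_{2^{h+1}-1}(u_{2\sigma_2}^{2^{h-1}}) = \gamma^j\bar{v}_h a_{\sigma_2}^{2^{h+1}-1}$ for each $j$, all of which must represent the same $E_{2^{h+1}-1}$-class as $\bar{v}_h a_{\sigma_2}^{2^{h+1}-1}$ since $u_{2\sigma_2}^{2^{h-1}}$ supports a unique $d_{2^{h+1}-1}$. This consistency is ensured by the earlier differentials on lower powers of $u_{2\sigma_2}$ (and on products involving conjugates), propagated by the Leibniz rule, which relate the various conjugate targets modulo boundaries. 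This is the precise sense in which the listed differentials generate all others under the multiplicative structure, and it is why the result is, as the paper remarks, a straightforward consequence of HHR Theorem~9.9.
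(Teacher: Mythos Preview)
Your approach is essentially the same as the paper's: invoke \cite[Theorem~9.9]{HHR} for $\BPR$ and push along the unit map $\BPR \to i_{C_2}^*\BPG$. The paper, however, organizes the argument differently and more carefully. Rather than asserting that ``naturality transports the differential,'' the paper runs an induction on $h$ and argues from the \emph{target}: the image of $\bar{v}_h a_{\sigma_2}^{2^{h+1}-1}$ is nonzero on $E_2$ and, by naturality, must be killed by a differential of length at most $2^{h+1}-1$; the inductive hypothesis plus degree reasons force the length to be exactly $2^{h+1}-1$ and the source to be $u_{2\sigma_2}^{2^{h-1}}$. Your direct-transport formulation hides a step: naturality only yields $d_r(f(x)) = f(d_r(x))$ on the $E_r$-page, so you must first know that $u_{2\sigma_2}^{2^{h-1}}$ survives to $E_{2^{h+1}-1}$ in the target spectral sequence. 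This does follow (it lies in filtration~0, hence is never a boundary, and naturality applied page by page shows it is a $d_{r'}$-cycle for $r'<2^{h+1}-1$), but you do not say so, and the paper's inductive packaging makes this automatic.

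Your final paragraph on Galois conjugates is where the proposal goes off track. By introducing the conjugate inclusions $\gamma^j$, you manufacture an apparent inconsistency---several different values for $d_{2^{h+1}-1}(u_{2\sigma_2}^{2^{h-1}})$---and then appeal vaguely to ``earlier differentials'' to resolve it. The paper sidesteps this entirely: it uses only the single unit map, proves the stated differential, and then checks at each stage that multiplying by permanent cycles accounts for \emph{all} $d_{2^{h+1}-1}$-differentials by a ``no more room'' degree count. That degree argument is what actually certifies completeness and consistency, and it replaces your structural claim about the $E_2$-page being polynomial on the listed generators (a description that, incidentally, should use the $\bar{t}_i$'s and their conjugates rather than the $\bar{v}_h$'s). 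Drop the conjugate-map discussion and instead supply the inductive degree-reason argument; then your proof will coincide with the paper's.
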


\begin{proof}
When $G = C_2$, the claim is immediate from the Slice Differential Theorem of Hill--Hopkins--Ravenel \cite[Theorem~9.9]{HHR}.  When $G$ is $C_{2^n}$ or $Q_8$ for $n \geq 2$, the $C_2$-restriction of $\BPG$ is a smash product of $(|G|/2)$-copies of $\BPR$.  In this case, we have a complete understanding of its $C_2$-slices and the $\E_2$-page of its $C_2$-slice spectral sequence.

The unit map $BP_\mathbb{R} \to i_{C_2}^*\BPG$ induces a map 
\begin{equation}\label{eq:BPRtoBPG}
\SliceSS(BP_\mathbb{R}) \longrightarrow \SliceSS(i_{C_2}^*\BPG)
\end{equation}
of $C_2$-slice spectral sequences.  We will proceed by using induction on $h$.  For the base case, when $h =1$, we have the $d_3$-differential 
\[d_3(u_{2\sigma_2}) = \bar{v}_1 a_{\sigma_2}^3\]
in $\SliceSS(\BPR)$.  Under the map (\ref{eq:BPRtoBPG}), the source is mapped to $u_{2\sigma_2}$ and the target is mapped to $\bar{v}_1 a_{\sigma_2}^3$.  By naturality, $\bar{v}_1 a_{\sigma_2}^3$ must be killed by a differential of length at most 3.  Since the lowest possible differential length is 3 by degree reasons, the $d_3$-differential 
\[d_3(u_{2\sigma_2}) = \bar{v}_1 a_{\sigma_2}^3\]
must occur in $\SliceSS(i_{C_2}^*\BPG)$.  Multiplying this differential by permanent cycles determines the rest of the $d_3$-differentials.  For degree reasons, these are all the $d_3$-differentials.

Suppose now that the induction hypothesis holds for all $1 \leq k \leq h-1$.  For degree reasons, after the $d_{2^h-1}$-differentials, the next possible differential is of length $d_{2^{h+1}-1}$.  In $\SliceSS(\BPR)$, consider the differential 
\[d_{2^{h+1}-1}(u_{2\sigma_2}^{2^{h-1}}) = \bar{v}_h a_{\sigma_2}^{2^{h+1}-1}.\]
The map (\ref{eq:BPRtoBPG}) sends both the source and the target of this differential to nonzero classes of the same name in $\SliceSS(i_{C_2}^*\BPG)$.  By naturality, the image of the target, $\bar{v}_ha_{\sigma_2}^{2^{h+1}-1}$, must be killed by a differential of length at most $2^{h+1}-1$.  For degree reasons, it is impossible for this class to be killed by a differential of length smaller than $2^{h+1}-1$.  It follows that the differential 
\[d_{2^{h+1}-1}(u_{2\sigma_2}^{2^{h-1}}) = \bar{v}_h a_{\sigma_2}^{2^{h+1}-1}\]
exists in $\SliceSS(i_{C_2}^*\BPG)$.  The rest of the $d_{2^{h+1}-1}$-differentials are determined by multiplying this differential with permanent cycles.  After these differentials, there is no room for other $d_{2^{h+1}-1}$-differentials by degree reasons.  This completes the induction step.  
\end{proof}

\begin{rem}\rm
We are grateful to Mike Hill for sharing the following argument, which directly shows that $\SliceSS(i_{C_2}^*\MUG)$ (and consequently $\SliceSS(i_{C_2}^*\BPG)$) is completely determined by $\SliceSS(\BPR)$.  This offers an alternative and shorter proof for \cref{thm:slicediffthm}.  The Thom isomorphism provides an equivalence 
\[i_{C_2}^* \MUG \simeq MU_\mathbb{R} \wedge {({BU_\mathbb{R}}_+)}^{\wedge(|G|/2-1)}, \]
where $BU_\mathbb{R}$ is the $C_2$-space $BU$ equipped with the complex conjugation action.  Since $\MUR$ is Real oriented, the right-hand side splits as $\MUR \wedge A$, where $A$ is a wedge of suspensions of regular representation spheres.  Therefore, $\SliceSS(i_{C_2}^*\MUG)$ also splits as a wedge of suspensions of $\SliceSS(\MUR)$ by regular representations.  When localized at 2, $\SliceSS(\MUR)$ further splits as a wedge of suspensions of $\SliceSS(\BPR)$.  It follows that the differentials in $\SliceSS(\BPR)$ completely determine the differentials in $\SliceSS(i_{C_2}^*\MUG)$, and consequently, the differentials in $\SliceSS(i_{C_2}^*\BPG)$.
\end{rem}

\section{Comparison of spectral sequences}\label{sec:Comparison}
In \cite{Ullman} and \cite{IbMadsenTate}, it is shown that the maps $\circled{1}$ and $\circled{2}$ in (\ref{diag:TateDiagramSS}) induce isomorphisms in a certain range in the integer-graded page.  For our purposes, we will extend their integral-graded isomorphism ranges to $RO(G)$-graded isomorphism ranges. 

\begin{defn} \label{defn:tauV}\rm
For $V \in RO(G)$, let 
\[\tau(V): = \min_{\{e\} \subsetneq H \subset G} |H| \cdot \dim V^H. \]
\end{defn}
\begin{lem}\label{lem:SliceConn}
For $V \in RO(G)$, the spectrum $S^V \wedge \widetilde{E}G$ is of slice $\geq \tau(V)$.  
\end{lem}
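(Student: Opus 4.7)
The plan is to apply the Hill--Yarnall characterization of the slice filtration \cite{HillYarnall}: a $G$-spectrum $X$ satisfies $X \geq n$ if and only if for every subgroup $K \leq G$, the geometric fixed point spectrum $\Phi^K X$ is $(\lceil n/|K| \rceil - 1)$-connected as an ordinary spectrum. Setting $X = S^V \wedge \tilde{E}G$ and $n = \tau(V)$, this reduces the lemma to checking one connectivity inequality per subgroup $K$ of $G$.

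The first step is a computation of geometric fixed points. Since $\Phi^K$ is strong symmetric monoidal,
\[
\Phi^K(S^V \wedge \tilde{E}G) \simeq \Phi^K(S^V) \wedge \Phi^K(\tilde{E}G) \simeq S^{\dim V^K} \wedge \Phi^K(\tilde{E}G).
\]
Because $\tilde{E}G$ is the cofiber of the collapse $EG_+ \to S^0$ associated with the family $\mathcal{P}$ of proper subgroups of $G$, the standard identification gives $\Phi^G(\tilde{E}G) \simeq S^0$ while $\Phi^K(\tilde{E}G) \simeq *$ for every proper $K \subsetneq G$ (including the trivial subgroup, reflecting that $\tilde{E}G$ is non-equivariantly contractible).

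With these calculations in hand, the verification splits into two cases. For every proper $K \subsetneq G$, the factor $\Phi^K(\tilde{E}G)$ is contractible, so $\Phi^K(S^V \wedge \tilde{E}G) \simeq *$ is $\infty$-connected and trivially satisfies the required slice bound. The only remaining case is $K = G$, where $\Phi^G(S^V \wedge \tilde{E}G) \simeq S^{\dim V^G}$ is $(\dim V^G - 1)$-connected. The Hill--Yarnall requirement becomes $\dim V^G \geq \lceil \tau(V)/|G| \rceil$, equivalently $|G| \cdot \dim V^G \geq \tau(V)$. This is immediate from the very definition of $\tau(V)$, since $H = G$ lies in the range of the minimum and so $|G| \cdot \dim V^G$ is one of the quantities being minimized.

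There is no serious obstacle in this argument: the vanishing of $\Phi^K(\tilde{E}G)$ for all proper $K$ concentrates the entire connectivity requirement at $K = G$, and that single remaining condition is precisely what the definition of $\tau(V)$ guarantees. If anything, the one point of care is to confirm that $H = G$ is indeed included among the subgroups appearing in the minimum defining $\tau(V)$, as otherwise the $K = G$ bound could fail (for instance, for $V = \lambda_2$ when $G = C_4$).
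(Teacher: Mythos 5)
There is a genuine error in the computation of geometric fixed points, and it would make the argument break on a simple example even though the final inequality happens to hold.

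You claim that because $\tilde{E}G$ is the cofiber of $EG_+ \to S^0$, one has $\Phi^G(\tilde{E}G) \simeq S^0$ and $\Phi^K(\tilde{E}G) \simeq *$ for \emph{every} proper $K \subsetneq G$. This is not correct, and it stems from conflating $EG$ with the universal space $E\mathcal{P}$ for the family $\mathcal{P}$ of proper subgroups. The space $EG$ is the universal space for the \emph{trivial} family $\{e\}$: its $H$-fixed points are empty for every nontrivial $H$, and contractible only for $H = e$. Consequently the cofiber satisfies $(\tilde{E}G)^H \simeq S^0$ for \emph{all} nontrivial $H \subset G$, not just $H = G$, while $(\tilde{E}G)^e \simeq *$. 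Since $\tilde{E}G$ is a suspension spectrum of a $G$-CW complex, $\Phi^H(\tilde{E}G) \simeq S^0$ for all nontrivial $H$, and $\Phi^e(\tilde{E}G) \simeq *$.

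This matters. Under your (wrong) computation, only the $K = G$ connectivity condition is ever active, so the lemma would hold with the weaker invariant $|G| \cdot \dim V^G$ in place of $\tau(V)$, and the minimum over all nontrivial subgroups in the definition of $\tau(V)$ would be pointless. A counterexample to your reasoning: take $G = C_4$ and $V = 1$ (the trivial one-dimensional representation). Then $|G|\cdot\dim V^G = 4$, but $\Phi^{C_2}(S^1 \wedge \tilde{E}C_4) \simeq S^1 \wedge S^0 = S^1$, which is only $0$-connected, so by Hill--Yarnall $S^1 \wedge \tilde{E}C_4$ is of slice $\geq n$ forces $n \leq 2$. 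Indeed $\tau(1) = \min(2\cdot 1, 4\cdot 1) = 2$. The correct argument therefore needs the full minimum: for every nontrivial $H$, the factor $\Phi^H(\tilde{E}G)\simeq S^0$ contributes a nontrivial condition $|H| \cdot \dim V^H \geq n$, and only the trivial subgroup's condition is vacuous. This is exactly the paper's proof, which you should have followed more carefully at the fixed-point computation step.
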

\begin{proof}
By \cite[Theorem~2.5]{HillYarnall}, $S^{V} \wedge \widetilde{E}G$ is of slice $\geq n$ if and only if the geometric fixed points $\Phi^H(S^{V} \wedge \widetilde{E}G) \in \tau_{\geq n/|H|}^{Post}$ for all $H \subset G$.  For $\widetilde{E}G$, its underlying space is contractible and its $H$-fixed point is $S^0$ whenever $H$ is a nontrivial subgroup of $G$.  Since $\Phi^HS^{V} = S^{V^H} \in \tau_{\geq \dim V^H}^{Post}$, $S^V \wedge \widetilde{E}G$ is of slice $\geq \tau(V)$.  \end{proof}

\begin{thm}\label{prop:isoslicehfp}
The map from the $RO(G)$-graded slice spectral sequence to the $RO(G)$-graded homotopy fixed point spectral sequence 
\[\begin{tikzcd}
\E_2^{s, V} \! \! = \pi_{V-s}^G P^{|V|}_{|V|} X \ar[r] \ar[d, Rightarrow] & \E_2^{s, V} \! \! =\pi_{V-s}^G F(EG_+, P^{|V|}_{|V|} X) \ar[d, Rightarrow ] \\ 
\pi_{V-s}^G X \ar[r] & \pi_{V-s}^G F(EG_+, X)
\end{tikzcd}\]
induces an isomorphism on the $\E_2$-page for pairs $(V, s)$ that satisfy the inequality 
\[\tau(V-s-1) > |V|.\]
Furthermore, this map induces a one-to-one correspondence between the differentials in this isomorphism region.  
\end{thm}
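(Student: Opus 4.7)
The plan is to compare the slice tower $P^\bullet X$ with the tower $F(EG_+, P^\bullet X)$ slice by slice, using the natural unit map $Y \to F(EG_+, Y)$. For each pure slice $Y = P^{|V|}_{|V|} X$, the cofiber sequence $EG_+ \to S^0 \to \tilde{E}G$ yields a fiber sequence
\[
F(\tilde{E}G, Y) \longrightarrow Y \longrightarrow F(EG_+, Y),
\]
and applying $\pi^G_{V-s}$ produces a long exact sequence whose middle arrow is exactly the $E_2$-page comparison in question. I will prove the comparison map is an isomorphism by showing that both $\pi^G_{V-s} F(\tilde{E}G, Y)$ and $\pi^G_{V-s-1} F(\tilde{E}G, Y)$ vanish.

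To establish the vanishing, I will identify $\pi^G_W F(\tilde{E}G, Y) \cong [S^W \wedge \tilde{E}G,\, P^{|V|}_{|V|}X]^G$ and invoke the slice filtration as a $t$-structure. By \cref{lem:SliceConn}, $S^W \wedge \tilde{E}G$ has slice $\geq \tau(W)$, while the pure slice $P^{|V|}_{|V|}X$ has slice $\leq |V|$, so the hom group vanishes whenever $\tau(W) > |V|$. The key numerical step is the implication $\tau(V-s-1) > |V| \Rightarrow \tau(V-s) > |V|$: for every nontrivial subgroup $H \subset G$,
\[
|H|\dim(V-s)^H \;=\; |H|\dim(V-s-1)^H + |H|,
\]
and the hypothesis together with $|H| \geq 2$ yields $|H|\dim(V-s)^H > |V| + 2$ for every nontrivial $H$; taking the minimum gives $\tau(V-s) > |V|$. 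Thus the single assumption $\tau(V-s-1) > |V|$ implies both required vanishings and hence the $E_2$-page isomorphism.

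For the one-to-one correspondence of differentials within the isomorphism region, I will use naturality: the comparison is a morphism of spectral sequences and commutes with every $d_r$. An induction on the page number $r$ propagates the isomorphism from $E_2$ to all subsequent pages within the region, and any $d_r$-differential whose source and target both satisfy $\tau(V-s-1) > |V|$ corresponds bijectively under the isomorphism to a $d_r$-differential in the homotopy fixed point spectral sequence.

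The main obstacle I anticipate is purely the bookkeeping: verifying that the single hypothesis $\tau(V-s-1) > |V|$ really does handle both the injectivity and the surjectivity conditions in the long exact sequence, and that the isomorphism region is stable under differentials. The clean resolution comes from the fact that $\tau$ is defined as a minimum over only the \emph{nontrivial} subgroups of $G$, so the bound $|H| \geq 2$ supplies exactly the slack needed.
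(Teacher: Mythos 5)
Your approach matches the paper's: pass to the cofiber sequence $F(\tilde{E}G, Y) \to Y \to F(EG_+, Y)$ for the pure slice $Y = P^{|V|}_{|V|}X$, reduce the $E_2$-isomorphism to the vanishing of $\pi^G_{V-s}F(\tilde{E}G, Y)$ and $\pi^G_{V-s-1}F(\tilde{E}G, Y)$, and apply \cref{lem:SliceConn} together with the fact that $Y$ is a $|V|$-slice. You also make explicit the check that $\tau(V-s-1) > |V|$ forces $\tau(V-s) > |V|$ (using $|H|\geq 2$), which the paper passes over by asserting directly that ``it suffices'' to handle the $V-s-1$ case.

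One caution about your final paragraph: the assertion that induction on $r$ ``propagates the isomorphism from $E_2$ to all subsequent pages within the region'' is stronger than what is true and than what the paper proves. A class in the region may support a $d_{r'}$-differential whose target lies \emph{outside} the region (the region is bounded above by roughly a line of slope one, and differentials increase filtration), so the map of $E_r$-pages can fail to be an isomorphism within the region for $r > 2$. The paper instead argues directly about individual differentials whose source and target both lie in the region, by induction on the length $r$; the key structural input is that the region is closed under passing from a target to its source (if $(V-1, s+r)$ satisfies the inequality, so does $(V,s)$), so classes in the region are only ever hit from inside the region. Your argument for the differential correspondence should be phrased in that form rather than as an $E_r$-page isomorphism.
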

\begin{proof}
Applying the functor $F(-, P^{|V|}_{|V|} X)$ to the cofiber sequence 
\[EG_+ \longrightarrow S^0 \longrightarrow \widetilde{E}G\]
produces the cofiber sequence 
\[F(\widetilde{E}G, P^{|V|}_{|V|} X) \longrightarrow P^{|V|}_{|V|} X \longrightarrow F(EG_+, P^{|V|}_{|V|} X).\]
The long exact sequence in homotopy groups implies that the map 
\[\pi_{V-s}^G P^{|V|}_{|V|} X \longrightarrow \pi_{V-s}^G F(EG_+, P^{|V|}_{|V|} X)\]
is an isomorphism when both $\pi_{V-s}^G F(\widetilde{E}G, P^{|V|}_{|V|} X)$ and $\pi_{V-s-1}^G F(\widetilde{E}G, P^{|V|}_{|V|} X)$ are trivial.  Since $\pi_{\star}^G F(\widetilde{E}G, P^{|V|}_{|V|} X) =\pi_0^G F(S^\star \wedge \widetilde{E}G, P^{|V|}_{|V|}X)$ and $P^{|V|}_{|V|}X$ is a $|V|$-slice, it suffices to find pairs $(V, s)$ such that $S^{V-s-1} \wedge \widetilde{E}G$ is of slice greater than $|V|$.  By \cref{lem:SliceConn}, this is equivalent to $(V, s)$ satisfying the inequality $\tau(V-s-1) > |V|$. 

We will now use induction on $r$ to show that the map of spectral sequences induces a one-to-one correspondence between all the $d_r$-differentials whose source and target are both in the isomorphism region.  The base case of the induction, when $r = 1$, is trivial.  

For the induction step, suppose that the map induces a one-to-one correspondence between all the $d_{r'}$-differentials in the isomorphism region for all $r' < r$.  Let $d_r(x) = y$ be a $d_r$-differential in $\SliceSS(X)$ such that both $x$ and $y$ are in the isomorphism region.  By naturality, $y'$ (the image of $y$) must be killed by a differential of length at most $r$ in $\HFPSS(X)$.  If the length of this differential is $r$, then the source must be $x'$ (the image of $x$) and we are done.  If the length of this differential is smaller than $r$, then the induction hypothesis implies that the same differential must appear in $\SliceSS(X)$.  This would mean that $y$ is killed by a differential of length smaller than $r$, which is a contradiction.  Therefore all the $d_r$-differentials in $ \SliceSS(X)$ that are in the isomorphism region appear in $\HFPSS(X)$.  

On the other hand, let $d_r(x') = y'$ be a $d_r$-differential in $\HFPSS(X)$ such that both $x'$ and $y'$ are in the isomorphism region.  Let $x$ be the pre-image of $x'$.  By naturality, $x$ must support a differential of length at most $r$.  If this differential is of length exactly $r$, then naturality implies that the target must be $y$, the unique preimage of $y'$.  If the length is smaller than $r$, then by the induction hypothesis, $x'$ must support a differential of length smaller than $r$ as well.  This is a contradiction.  Therefore all the $d_r$-differentials in $\HFPSS(X)$ that are in the isomorphism region appear in $\SliceSS(X)$.  This completes the induction step.  
\end{proof}

\begin{rem}\rm
In the integer-graded page, let $V = t \in \mathbb{Z}$.  Let $m(G)$ be the order of the smallest nontrivial subgroup of $G$.  When $t - s \geq 1$, $\tau(t-s-1) = m(G)(t-s-1)$, and the isomorphism region in \cref{prop:isoslicehfp} is defined by the inequality 
\[m(G)(t-s-1) > t.\]
This recovers Theorem~I.9.4 in \cite{Ullman}.
\end{rem}

\begin{example}\rm
When $G = C_{2^n}$, $RO(G)$ is generated by $\{1, \sigma, \lambda_2, \ldots, \lambda_{n}\}$.  The representations $\lambda_i$ are rotations and have no $H$-fixed points when $H$ is a nontrivial subgroup of $G$.  Therefore, if we fix an element $V \in RO(G)$ of the form 
\[V = c_1 \cdot \sigma + c_2\cdot \lambda_2 + c_3\cdot \lambda_3 + \cdots + c_n\cdot \lambda_n, \, \, \, c_i \in \mathbb{Z},\]
then ${V}^{H} = (c_1 \sigma)^H$ for all nontrivial subgroups $H \subset G$.  When $t-s-1 > |c_1|$, we have the equality $\tau(V + t - s -1) = 2(c_1 + t - s -1)$.  On the $(V+t-s, s)$-graded page, the isomorphism region in \cref{prop:isoslicehfp} contains pairs $(t, s)$ that satisfy the inequality 
\[2(c_1 + t-s-1) > |V|+t,\]
or equivalently 
\[s< (t-s)+ 2c_1 -2-|V|.\]
In particular, the last inequality shows that on any of the $(V + t -s, s)$-graded pages, the isomorphism region is bounded above by a line of slope 1 when $t-s \gg 0$.  
\end{example}

\begin{thm}\label{prop:isohfptate}
The map from the $RO(G)$-graded homotopy fixed point spectral sequence to the $RO(G)$-graded Tate spectral sequence induces an isomorphism on the $\E_2$-page for classes in filtrations $s >0$, and a surjection for classes in filtration $s = 0$.  Furthermore, there is a one-to-one correspondence between differentials whose source is of nonnegative filtration.
\end{thm}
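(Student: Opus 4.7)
The plan is to adapt the classical integer-graded argument of B\"{o}ckstedt--Madsen \cite{IbMadsenTate} to the $RO(G)$-graded setting. At each level of the slice tower, smashing the cofiber sequence $EG_+ \to S^0 \to \tilde{E}G$ with $F(EG_+, P^{|V|}_{|V|}X)$ yields
\[ EG_+ \wedge P^{|V|}_{|V|}X \longrightarrow F(EG_+, P^{|V|}_{|V|}X) \longrightarrow \tilde{E}G \wedge F(EG_+, P^{|V|}_{|V|}X), \]
after using the standard equivalence $EG_+ \wedge F(EG_+, Y) \simeq EG_+ \wedge Y$ to identify the fiber. The long exact sequence in homotopy groups associated with this cofiber sequence is the main tool: the kernel and cokernel of the $E_2$-page map at $(V, s)$ are controlled by $\pi^G_{V-s}$ and $\pi^G_{V-s-1}$ of $EG_+ \wedge P^{|V|}_{|V|}X$, which are $E_1$-terms of the HOSS.

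The key input is the vanishing $\pi^G_{V-s}(EG_+ \wedge P^{|V|}_{|V|}X) = 0$ for $s > 0$. I would verify this by decomposing $P^{|V|}_{|V|}X$ into its constituent slice cells $G_+ \wedge_H S^W$ (with $|W|$ equal to $|V|$ or $|V|+1$ depending on cell type), and reducing, via the Wirthm\"{u}ller isomorphism together with the free $G$-spectrum identification $[S^U, EG_+ \wedge Z]^G \cong [BG^U, Z_{hG}]$, to a computation on Thom spectra. Each homotopy orbit $(G_+ \wedge_H S^W)_{hG} \simeq BH^W$ is $(|V|-1)$-connective, and a cellular obstruction argument against the Thom spectrum $BG^{V-s}$, whose bottom cell lies in dimension $|V|-s$, yields the vanishing for $s > 0$.

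With this vanishing in hand, the long exact sequence immediately gives iso for $s > 0$ (both $\pi^G_{V-s}$ and $\pi^G_{V-s-1}$ of the fiber vanish) and surjection for $s = 0$ (only the $\pi^G_{V-1}$-term, at filtration $1 > 0$, is forced to vanish). For the one-to-one correspondence of differentials with source in nonnegative filtration, I would argue by induction on the page $r$, paralleling the inductive argument in the proof of \cref{prop:isoslicehfp}: a $d_r$-differential with source at $s \geq 0$ has target at $s+r > 0$ in the iso range, so naturality pins down the correspondence uniquely, and the $s = 0$ case works because surjectivity at the source suffices when the target still lies in the iso region. The main obstacle I anticipate is the $RO(G)$-graded vanishing in the second step---the classical integer-graded case is immediate from connectivity, but the representation-graded version requires careful bookkeeping with the Thom spectra $BG^{V-s}$ and the specific structure of slice cells.
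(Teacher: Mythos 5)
Your approach is correct but takes a genuinely different route from the paper. The paper identifies the $E_2$-pages directly as group cohomology $H^s(G, \pi_0(S^{-V}\wedge X))$ for the HFPSS and Tate cohomology $\hat H^s(G, \pi_0(S^{-V}\wedge X))$ for the Tate spectral sequence, and then invokes the purely algebraic facts that $\hat H^s = H^s$ for $s > 0$ and $H^0 \twoheadrightarrow \hat H^0$. You instead work at the level of the slice tower, use the norm cofiber sequence
\[ EG_+ \wedge P^{|V|}_{|V|}X \longrightarrow F(EG_+, P^{|V|}_{|V|}X) \longrightarrow \tilde{E}G \wedge F(EG_+, P^{|V|}_{|V|}X), \]
and deduce the comparison from the vanishing of $\pi^G_{V-s}(EG_+ \wedge P^{|V|}_{|V|}X)$ for $s > 0$. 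This is sound and arguably more consistent with the slice-tower formalism used throughout Section 2 (whereas the paper's algebraic identification implicitly appeals to the standard skeletal-filtration model of the HFPSS/TateSS). It is also longer, since you must establish a connectivity estimate that the algebraic route gets for free.

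Two technical points in your sketch of the vanishing should be tightened. First, a $|V|$-slice $P^{|V|}_{|V|}X$ is not in general a wedge of slice cells $G_+ \wedge_H S^W$, so the cell-by-cell reduction does not apply to an arbitrary $X$. Second, the ``free $G$-spectrum identification'' is misstated: what the Adams isomorphism gives is $\pi^G_{V-s}(EG_+ \wedge Z) \cong \pi_0\bigl((S^{s-V}\wedge Z)_{hG}\bigr)$, not $[BG^{V-s}, Z_{hG}]$. Both issues are sidestepped by arguing with connectivity directly: by the Hill--Yarnall characterization (already invoked in \cref{lem:SliceConn}), the underlying spectrum of a $|V|$-slice is $(|V|-1)$-connected, so $S^{s-V}\wedge Z$ has underlying spectrum $(s-1)$-connected; since homotopy orbits preserve connectivity, $\pi_0\bigl((S^{s-V}\wedge Z)_{hG}\bigr) = 0$ for $s > 0$, with no need for a slice-cell decomposition. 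With that replacement your long exact sequence argument gives exactly the iso for $s>0$ and the surjection at $s=0$, and your inductive treatment of the differentials matches the paper's deferral to the argument of \cref{prop:isoslicehfp}.
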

\begin{proof}
The $\E_2$-page of the Tate spectral sequence for $X$ is
\[\E_{2}^{s, V} =\hat{H}^{s}(G,\pi_0(S^{-V}\wedge X)) \Longrightarrow \pi_V^G \,\widetilde{E}G \wedge F(EG_+, X),\]
and the $\E_2$-page of the homotopy fixed point spectral sequence is 
\[\E_{2}^{s, V} =H^{s}(G,\pi_0(S^{-V}\wedge X)) \Longrightarrow \pi_V^G \,F(EG_+, X). \]
By the definition of Tate cohomology, $\hat{H}^s = H^s$ when $s >0$.  Furthermore, the map $H^0 \to \hat{H}^0$ is a surjection whose kernel is the image of the norm map.  This proves the claim about the $\E_2$-page.  The proof for the one-to-one correspondence of differentials is exactly the same as the proof in \cref{prop:isoslicehfp}. 
\end{proof}

We end this section by discussing the invertibility of certain Euler classes in the Tate spectral sequence.  Recall that if $V$ is a $G$-representation such that the fixed point set $V^H$ is trivial whenever $H \subset G$ is nontrivial, then $S(\infty V)$ is a geometric model for $EG$, and $S^{\infty V}$ is a geometric model for $\widetilde{E}G$.  Therefore, for any $G$-spectrum $X$, 
\[\widetilde{E}G \wedge X \simeq S^{\infty V} \wedge X = a_V^{-1}X.\]

Specialized to the case when $G = C_{2^n}$ and $Q_8$, we see that $\widetilde{E}C_{2^n} \simeq S^{\infty \lambda_n}$ and $\widetilde{E} Q_8 \simeq S^{\infty \mathbb{H}}$.  Moreover, if $X$ is a $G$-spectrum, then the Tate spectral sequence for $X$ is the spectral sequence associated to the tower $\{\widetilde{E}G\wedge F(EG_{+},P^{\bullet}X)\}$.  This implies that the class $a_{\lambda_n}$ is invertible in all the $C_{2^n}$-Tate spectral sequences, and the class $a_{\mathbb{H}}$ is invertible in all the $Q_8$-Tate spectral sequences.

\section{The norm structure}\label{sec:NormStructure}

In this section, we give a brief summary of results for the norm structure in equivariant spectral sequences.  For more detailed discussions, see \cite[Chapter~I.5]{Ullman}, \cite[Section~4]{HHRC_4}, and \cite[Section~3.4]{MeierShiZengSegal}.  

Consider a tower 
\[\cdots \longrightarrow P^{i+1} \longrightarrow P^i \longrightarrow P^{i-1} \longrightarrow \cdots\] 
of $G$-spectra and let $\E_{*}^{*, \star}$ be the associated spectral sequence.  Set $P_n^m = \text{fib}(P^m \to P^{n-1})$ and $P_n = P_n^\infty$.  The towers that will be relevant to us in this paper are the towers for the slice spectral sequence, the homotopy fixed point spectral sequence, and the Tate spectral sequence. 

Let $H \subset G$ be a subgroup.  Suppose we have maps $N_H^G P_n \to P_{|G/H|n}$ and $N_H^G P_n^n \to P_{|G/H|n}^{|G/H|n}$ that are (up to homotopy) compatible with the maps $P_n \to P_{n-1}$ and $P_n \to P_n^n$.  This is called the \textit{norm structure}.  It induces norm maps 
\[N_H^G: \E_2^{s, V+s} \longrightarrow \E_2^{|G/H|s, \Ind_H^GV +|G/H|s}.\]
If $X$ is a commutative $G$-spectrum, then its slice spectral sequence, homotopy fixed point spectral sequence, and Tate spectral sequence all have the norm structure that is induced from the multiplication on $X$ (for the Tate spectral sequence, the norm structure exists as long as $H \neq e$, as discussed in \cite[Example~3.9]{MeierShiZengSegal}).  

The following proposition (\cite[Proposition~3.7]{MeierShiZengSegal}) is a restatement of \cite[Proposition I.5.17]{Ullman} and \cite[Theorem~4.7]{HHRC_4}.  It describes the behaviour of differentials under the norm structure. 

\begin{prop}\label{thm:normdiff}
Let $x \in \E_2(G/H)$ be an element representing zero in $\E_{r+1}(G/H)$.  Then $N_H^G(x)$ represents zero in $\E_{|G/H|(r-1)+2}(G/G)$. 
\end{prop}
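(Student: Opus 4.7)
My plan is to exploit the spectrum-level norm structure $N_H^G P_n^n \to P_{|G/H|n}^{|G/H|n}$ and $N_H^G P_n \to P_{|G/H|n}$ to transport a witness for the vanishing of $x$ on $E_{r+1}(G/H)$ into a witness for the vanishing of $N_H^G(x)$ on $E_{|G/H|(r-1)+2}(G/G)$. The argument proceeds in three steps.

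First, I would reformulate the hypothesis as a concrete lifting/null-homotopy statement for the representative of $x$. By the standard exact-couple description of the spectral sequence associated to the tower $\{P^n\}$, a class $x \in E_2^{s, V+s}(G/H)$ represented by a map of $H$-spectra $\tilde x: S^V \to P_s^s(X)$ vanishes on $E_{r+1}$ exactly when an appropriate null-homotopy or factorization witnessing its vanishing exists inside an $r$-step fiber of the tower. Concretely, this witness is an element of $\pi_*^H$ of a specific piece of the filtration.

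Second, I would apply $N_H^G$ to this witness and compose with the hypothesized comparison maps into the $G$-tower. This produces a representative $S^{\Ind_H^G V} \cong N_H^G S^V \longrightarrow P_{|G/H|s}^{|G/H|s}(X)$ for $N_H^G(x)$ together with a factorization through levels of the $G$-tower, yielding a vanishing statement for $N_H^G(x)$ on some later page.

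The third step, which I expect to be the main obstacle, is the precise filtration accounting needed to obtain the bound $|G/H|(r-1)+1$ on the differential length, which is strictly sharper than the naive bound $|G/H|r$ one would get from simply multiplying filtrations by $|G/H|$. Because $N_H^G$ is an indexed smash product rather than a cofiber-exact functor, applying it to an $r$-step filtration produces a filtration whose associated graded involves iterated indexed smash products of the original layers rather than a clean $|G/H|$-fold rescaling. A careful distribution of the single $r$-step lift across the $|G/H|$ smash factors, combined with the multiplicative behavior of the comparison maps, yields the improved bound. This combinatorial bookkeeping is precisely what is carried out in \cite[Proposition~I.5.17]{Ullman} and \cite[Theorem~4.7]{HHRC_4}, and I would follow their argument.
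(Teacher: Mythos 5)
The paper does not prove this proposition at all; it presents it as a restatement of \cite[Proposition~I.5.17]{Ullman}, \cite[Theorem~4.7]{HHRC_4}, and \cite[Proposition~3.7]{MeierShiZengSegal}, and simply cites those sources. Your proposal ultimately defers the decisive bookkeeping to the same two references, so in that sense it matches the paper's treatment. Your outline of the argument is correct in spirit: reformulate ``$x$ represents zero on $E_{r+1}(G/H)$'' as a factorization through an $r$-step piece $P_{s-r}^{s-1}$ of the tower (i.e., $x$ is in the image of the connecting map $\pi_{V+1}^H P_{s-r}^{s-1} \to \pi_V^H P_s^s$); apply $N_H^G$ to this witness; and then use the norm structure maps into the $G$-tower. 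You also correctly flag that the crux is that $N_H^G$ applied to an $r$-layer filtered piece does not yield an $r$-layer piece. The precise count is worth recording: the filtration degrees occurring in $P_{s-r}^{s-1}$ are $s-r, \ldots, s-1$, so the degrees occurring in the indexed smash $N_H^G P_{s-r}^{s-1}$ (sums of $|G/H|$ of these) range from $|G/H|(s-r)$ to $|G/H|(s-1)$, which is $|G/H|(r-1)+1$ consecutive levels; this is exactly the source of the bound $|G/H|(r-1)+1$ on the length of the killing differential, hence vanishing on $E_{|G/H|(r-1)+2}(G/G)$. Your phrase ``distribution of the single $r$-step lift across the $|G/H|$ smash factors'' is a somewhat loose description of this---the improvement is intrinsic to the filtration on the indexed smash, not to any choice of distribution---but the underlying idea is the right one, and the sources you cite carry out the construction of the requisite comparison maps $N_H^G P_n^m \to P_{|G/H|n}^{|G/H|m}$ from the hypothesized norm structure.
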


In other words, \cref{thm:normdiff} states that if $x \in \E_2^{s, V+s}(G/H)$ is killed by a $d_r$-differential, then $N_H^G(x)\in \E_2^{|G/H|s, \Ind_H^G V + |G/H|s}(G/G)$ must be killed by a differential of length at most $|G/H|(r-1)+1$. 



Let $\sigma_2$ be the sign representation of $C_2$.  As an immediate consequence of Equations (\ref{eq:normavuv1}) and (\ref{eq:normavuv2}), we have the following proposition.

\begin{prop}\label{prop:normofhomologyclass}
The following equalities hold: 
\begin{align*}
N_{C_2}^{C_{2^n}}(a_{\sigma_2}) &= a_{\lambda_n}^{2^{n-2}},\\ 
N_{C_2}^{C_{2^n}}(u_{2\sigma_2}) &= \frac{u_{\lambda_n}^{2^{n-1}}}{u_{2\sigma}\prod_{i = 2}^{n-1} u_{\lambda_i}^{2^{i-1}}},\\
N_{C_2}^{Q_8}(a_{\sigma_2}) &= a_\mathbb{H},\\
N_{C_2}^{Q_8}(u_{2\sigma_2}) &= \frac{u_{\mathbb{H}}^2}{u_{2\sigma_i}u_{2\sigma_j}u_{2\sigma_k}}.
\end{align*}
\end{prop}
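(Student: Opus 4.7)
The plan is to derive each of the four equalities directly from the norm-compatibility formulas \eqref{eq:normavuv1}--\eqref{eq:normavuv2}, together with the multiplicativity $a_{V\oplus W}=a_Va_W$ and $u_{V\oplus W}=u_Vu_W$ and the normalization $u_n=1$ on trivial representations. Once these general tools are in place, each identity reduces to an explicit decomposition of an induced representation in $RO(G)$.

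For the quaternion case, a Frobenius reciprocity and dimension count identifies $\Ind_{C_2}^{Q_8}\sigma_2$ with $\mathbb{H}$ (the unique real $4$-dimensional irreducible of $Q_8$, which appears with multiplicity one since $\mathbb{H}|_{C_2}=4\sigma_2$ and $\dim_\mathbb{R}\End_{Q_8}\mathbb{H}=4$) and $\Ind_{C_2}^{Q_8}(1)$ with $1+\sigma_i+\sigma_j+\sigma_k$ (pulling back the regular representation of $V_4\cong Q_8/\{\pm 1\}$). Applying $N(a_V)=a_{\Ind V}$ to $V=\sigma_2$ gives the third equality; substituting $\Ind(2)=2(1+\sigma_i+\sigma_j+\sigma_k)$ and $\Ind(2\sigma_2)=2\mathbb{H}$ into $u_{\Ind|V|}\cdot N(u_V)=u_{\Ind V}$ yields $u_{2\sigma_i}u_{2\sigma_j}u_{2\sigma_k}\cdot N(u_{2\sigma_2})=u_{\mathbb{H}}^2$, which rearranges to the displayed fraction. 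The cyclic case proceeds identically using $\Ind_{C_2}^{C_{2^m}}\sigma_2=2^{m-2}\lambda_m$ and $\Ind_{C_2}^{C_{2^m}}(1)=\mathbb{R}[C_{2^m}/C_2]=1+\sigma+\sum_{i=2}^{m-1}2^{i-2}\lambda_i$, both expressed in the $\{1,\sigma,\lambda_2,\dots,\lambda_m\}$-basis of $RO(C_{2^m})$ used throughout the paper (the second identity follows by iteratively pulling back the regular representation of $C_{2^{m-1}}$ along $C_{2^m}\to C_{2^{m-1}}$); substituting $\Ind(2)=2+2\sigma+\sum_{i=2}^{m-1}2^{i-1}\lambda_i$ and $\Ind(2\sigma_2)=2^{m-1}\lambda_m$ into the $u$-compatibility then produces $u_{2\sigma}\prod_{i=2}^{m-1}u_{\lambda_i}^{2^{i-1}}\cdot N(u_{2\sigma_2})=u_{\lambda_m}^{2^{m-1}}$.

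The main technical point is the cyclic decomposition $\Ind_{C_2}^{C_{2^m}}\sigma_2=2^{m-2}\lambda_m$ for $m\geq 3$: the literal induced representation also contains faithful $2$-dimensional rotation summands that are not isomorphic to $\lambda_m$, and the stated equality depends on the standard HHR convention under which all such summands are identified with $\lambda_m$ when tracking Euler and orientation classes. Once this convention is fixed, the four formulas follow by a routine unwinding of the identities above.
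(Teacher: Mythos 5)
Your proposal is correct and uses essentially the same argument as the paper: apply the norm formulas $N(a_V)=a_{\Ind V}$ and $u_{\Ind|V|}N(u_V)=u_{\Ind V}$ together with the decompositions of $\Ind_{C_2}^{G}(1)$ and $\Ind_{C_2}^{G}(\sigma_2)$ in $RO(G)$. Your closing remark about the convention identifying the various faithful rotation summands with $\lambda_m$ (and similarly with $\lambda_i$ in $\Ind_{C_2}^{C_{2^m}}(1)$) is a genuine subtlety that the paper leaves implicit, and it is resolved exactly as you say: these identifications hold for Euler and orientation classes in $\pi_\star^G H\underline{\mathbb{Z}}$, which is the ambient ring where the proposition is being applied.
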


\begin{proof}
The equalities follow from (\ref{eq:normavuv1}), (\ref{eq:normavuv2}), and the following facts about induced representations: 
\begin{align*}
\Ind_{C_2}^{C_{2^n}}(1) &= 1 + \sigma + \sum_{i = 2}^{n-1} 2^{i-2} \lambda_i, \\
\Ind_{C_2}^{C_{2^n}}(\sigma_2) &= 2^{n-2} \lambda_n, \\
\Ind_{C_2}^{Q_8}(1) &= 1 + \sigma_i + \sigma_j + \sigma_k,\\ 
\Ind_{C_2}^{Q_8}(\sigma_2) &= \mathbb{H}.
\end{align*}
\end{proof}

\begin{thm} \label{theorem:SliceElementVanishing} \hfill
\begin{enumerate}
\item The class $N_{C_2}^{C_{2^n}}(\bar{v}_h)a_{\lambda_n}^{2^{n-2}(2^{h+1}-1)}$ in the $C_{2^n}$-slice spectral sequence for $\BPCn$ is killed on or before the $\E_{2^{h+n}-2^n+1}$-page.  
\item The class $N_{C_2}^{Q_8}(\bar{v}_h)a_{\mathbb{H}}^{2^{h+1}-1}$ in the $Q_8$-slice spectral sequence for $\BPQ$ is killed on or before the $\E_{2^{h+3}-7}$-page. 
\end{enumerate}
\end{thm}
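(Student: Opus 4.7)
The plan is to derive both statements directly from the $C_2$-slice differentials of \cref{thm:slicediffthm} by pushing them forward via the norm-to-differential principle of \cref{thm:normdiff}. The norm structure is exactly the tool designed to trade a differential supported at the $G/C_2$-level for a (longer) differential at the $G/G$-level, so both claims should fall out of a single computation for each group, with part (1) and part (2) differing only in the index $|G/C_2|$.

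Concretely, I would first invoke \cref{thm:slicediffthm} to obtain, inside the $C_2$-slice spectral sequence of $i_{C_2}^*\BPG$, the differential
\[d_{2^{h+1}-1}\bigl(u_{2\sigma_2}^{2^{h-1}}\bigr) \;=\; \bar v_h\, a_{\sigma_2}^{2^{h+1}-1}.\]
Reading this differential as occurring at the $G/C_2$-level of the $G$-slice spectral sequence of $\BPG$ (which is legitimate because the $G$-equivariant refinement $S^0[G\cdot\bar t_1,G\cdot\bar t_2,\dots]\to\BPG$ recalled in \cref{section:preliminaries} restricts, by construction, to the $C_2$-equivariant refinement underlying \cref{thm:slicediffthm}), the class $\bar v_h a_{\sigma_2}^{2^{h+1}-1}$ represents zero on the $E_{2^{h+1}}$-page at the $G/C_2$-level. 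Applying \cref{thm:normdiff} with $H=C_2$ and $r = 2^{h+1}-1$, the image $N_{C_2}^G\bigl(\bar v_h\, a_{\sigma_2}^{2^{h+1}-1}\bigr)$ must die by a differential of length at most $|G/C_2|(r-1)+1$. For $G = C_{2^m}$ this bound is $2^{m-1}(2^{h+1}-2)+1 = 2^{h+m}-2^m+1$, and for $G = Q_8$ it is $4(2^{h+1}-2)+1 = 2^{h+3}-7$, matching the page numbers stated in parts (1) and (2) respectively.

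It remains to identify the norm on the nose. Using multiplicativity of $N_{C_2}^G$ together with \cref{prop:normofhomologyclass}, one has
\[N_{C_2}^G\!\bigl(\bar v_h\, a_{\sigma_2}^{2^{h+1}-1}\bigr) \;=\; N_{C_2}^G(\bar v_h)\cdot N_{C_2}^G(a_{\sigma_2})^{2^{h+1}-1},\]
whose Euler-class factor equals $a_{\lambda_m}^{2^{m-2}(2^{h+1}-1)}$ for $G = C_{2^m}$ and $a_{\mathbb H}^{2^{h+1}-1}$ for $G = Q_8$. Matching this with the previous paragraph yields both claims. The only step in the argument that is not a mechanical substitution is the assertion that the $C_2$-slice differentials of \cref{thm:slicediffthm} genuinely sit at the $G/C_2$-level of the full $G$-slice spectral sequence of $\BPG$; this is the main obstacle, and it relies on the pure-slice structure of $\BPG$ coming from its equivariant cell refinement, without which \cref{thm:normdiff} could not be invoked.
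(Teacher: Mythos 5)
Your proof is correct and follows the same route as the paper's: read off the $C_2$-slice differential from \cref{thm:slicediffthm}, apply \cref{thm:normdiff} with $(H,G,x,r)=(C_2,G,\bar v_h a_{\sigma_2}^{2^{h+1}-1},2^{h+1}-1)$, and identify the normed class using \cref{prop:normofhomologyclass}; the page-number arithmetic $|G/C_2|(r-1)+1$ gives $2^{h+m}-2^m+1$ and $2^{h+3}-7$ exactly as you compute. The one caveat you flag --- that the $C_2$-slice differentials of $i_{C_2}^*\BPG$ must coincide with the $G/C_2$-level of the $G$-slice spectral sequence of $\BPG$ in order for the norm structure of \cref{thm:normdiff} to apply --- is real and correctly resolved by the regular slice cell structure of $\BPG$ (the $G$-slice tower of $\BPG$ restricts to the $H$-slice tower of $i_H^*\BPG$ by the Hill--Hopkins--Ravenel Slice/Reduction Theorems), a point the paper uses implicitly.
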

\begin{proof}
By \cref{thm:slicediffthm}, we have the differential
\[d_{2^{h+1}-1}(u_{2\sigma_2}^{2^{h-1}}) = \bar{v}_h a_{\sigma_2}^{2^{h+1}-1}\]
in the $C_2$-slice spectral sequence for $i_{C_2}^*\BPCn$ and $i_{C_2}^*\BPQ$.  Our claims follow by applying \cref{thm:normdiff} and the equations in \cref{prop:normofhomologyclass} to $(H, G, x, r) = (C_2, C_{2^n}, \bar{v}_h a_{\sigma_2}^{2^{h+1}-1}, 2^{h+1}-1)$ and $(C_2, Q_8, \bar{v}_h a_{\sigma_2}^{2^{h+1}-1}, 2^{h+1}-1)$.
\end{proof}

\section{Vanishing in the Tate spectral sequence}\label{sec:TateVanishing}
By the work of Hahn and Shi \cite{realorientationDanny}, the Lubin--Tate theory $E_h$ admits an equivariant orientation.  More specifically, for $G \subset \mathbb{G}_h$ a finite subgroup, there is a $G$-equivariant map from $\BPG$ to $E_h$.  Furthermore, this $G$-equivariant map factors through $(N_{C_2}^G \bar{v}_h)^{-1} \BPG$: 
\[\begin{tikzcd}
\BPG \ar[r] \ar[d] & E_h \\ 
(N_{C_2}^G \bar{v}_h)^{-1} \BPG \ar[ru]
\end{tikzcd}\]
This equivariant orientation induces the following diagram of spectral sequences: 
\[\begin{tikzcd}
\SliceSS(\BPG) \ar[r] & \HFPSS(\BPG) \ar[r] \ar[d]& \TateSS(\BPG) \ar[d] \\
& \HFPSS(E_h) \ar[r] & \TateSS(E_h).
\end{tikzcd}\]

\begin{thm}\label{theorem:TateVanishing}
For any height $h$ and any finite subgroup $G \subset \mathbb{G}_h$, all the classes in the $RO(G)$-graded Tate spectral sequence for $E_h$ vanish after the $\E_{N_{h, G}}$-page. Here, $N_{h, G}$ is defined as in \cref{def:NhG}.
\end{thm}

In order to prove ~\cref{theorem:TateVanishing}, we will first prove the following lemmas: 

\begin{lem}\label{lem:psylowTate}
Let $K$ be a finite group and $H \subset K$ a $2$-Sylow subgroup.  For a $2$-local $K$-spectrum $X$, if all the classes in the $RO(H)$-graded Tate spectral sequence for $X$ vanish after the $\E_r$-page, then all the classes in the $RO(K)$-graded homotopy fixed point spectral sequence for $X$ will also vanish after the $\E_r$-page.
\end{lem}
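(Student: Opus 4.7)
The plan is to prove this in two stages: first, translate the $H$-Tate vanishing into a strong horizontal vanishing line for the $H$-HFPSS via the comparison theorem \cref{prop:isohfptate}; second, transport this vanishing line from $H$ to $G$ via a Sylow transfer argument, exploiting that $[G:H]$ is invertible on the $p$-local spectrum $X$.

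For stage one, I would take any class $x$ of filtration $s \geq r$ in the $H$-HFPSS that allegedly survives to $E_\infty$. By \cref{prop:isohfptate}, its image $x'$ in the $H$-Tate SS is nonzero on $E_2$, and by the vanishing hypothesis $x'$ must be killed by some $d_{r'}$ with $r' \leq r$. Whether $x'$ supports this differential or occurs as a target $d_{r'}(y') = x'$, the relevant source has filtration either $s$ or $s - r'$, both nonnegative since $s \geq r \geq r'$. The one-to-one correspondence of differentials with source of nonnegative filtration then lifts this differential back to the HFPSS and kills $x$, contradicting survival. A short further check rules out nonzero $d_{r'}$ with $r' > r$ in the $H$-HFPSS, since their targets lie in filtration $\geq r$ which has already vanished on $E_{r+1}$; this yields the strong horizontal vanishing line at filtration $r$ in the $H$-HFPSS.

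For stage two, I would use restriction and transfer as filtration-preserving maps of $RO(G)$-graded spectral sequences
\[ \mathrm{Res}: G\text{-}\HFPSS(X) \longrightarrow H\text{-}\HFPSS(X), \qquad \mathrm{Tr}: H\text{-}\HFPSS(X) \longrightarrow G\text{-}\HFPSS(X), \]
satisfying $\mathrm{Tr} \circ \mathrm{Res} = [G:H] \cdot \id$, with $[G:H]$ invertible on $X$ since $H$ is a $p$-Sylow and $X$ is $p$-local. For a class $x$ of filtration $s \geq r$ in the $G$-HFPSS surviving to $E_\infty$, stage one forces $\mathrm{Res}(x)$ to die in the $H$-HFPSS: if $\mathrm{Res}(x)$ supports a differential, naturality of $\mathrm{Res}$ produces one on $x$; if $\mathrm{Res}(x) = d_{r'}(y)$ is a boundary, then $d_{r'}(\mathrm{Tr}(y)) = [G:H] \cdot x$ exhibits $x$ as a boundary up to an invertible scalar. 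Either case contradicts survival. Differentials $d_{r'}(x) = y \neq 0$ of length $r' > r$ in the $G$-HFPSS are ruled out similarly: $\mathrm{Res}(y) = d_{r'}(\mathrm{Res}(x)) = 0$ by collapse of the $H$-HFPSS, so $[G:H] \cdot y = \mathrm{Tr}(\mathrm{Res}(y)) = 0$ and thus $y = 0$ by invertibility.

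The main obstacle, which is formal rather than computational, lies in setting up both $\mathrm{Res}$ and $\mathrm{Tr}$ as genuine maps of $RO(G)$-graded spectral sequences compatible with filtrations and differentials, and in tracking the grading carefully under the restriction $RO(G) \to RO(H)$. Once this formal structure is in place, the two stages reduce to straightforward naturality arguments and diagram chases powered by the comparison theorem and the invertibility of the index $[G:H]$.
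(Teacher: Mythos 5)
Your proof is correct but establishes a somewhat different statement from the one the paper's own proof actually proves, and it takes a considerably longer route. The lemma's conclusion as printed --- that the $RO(G)$-graded \emph{homotopy fixed point} spectral sequence vanishes after $E_r$ --- is almost certainly a misprint for ``Tate'': the paper's proof never leaves the Tate spectral sequence, and \cref{lem:psylowTate} is invoked in the proof of \cref{theorem:TateVanishing} precisely to reduce the Tate-vanishing claim to the Sylow $2$-subgroup. (Literal vanishing of the HFPSS after $E_r$ cannot hold anyway, since the unit survives in filtration zero; what you correctly prove is the strong horizontal vanishing line.) The paper's proof is a one-step Sylow retract entirely inside the Tate spectral sequence: since $\mathrm{tr}\circ\mathrm{res} = [G:H]\cdot\mathrm{id}$ is an isomorphism on the $p$-local $X$, the $RO(G)$-graded $\TateSS(X)$ is a retract of the $RO(H)$-graded $\TateSS(X)$; a nonzero $G$-class therefore has nonzero restriction, the hypothesis kills that restriction by a differential of length at most $r$, and naturality of $\mathrm{res}$ (when the restriction supports the differential) or of $\mathrm{tr}$ (when it is a target) transports the death back to the $G$-Tate page. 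Your version instead first converts the $H$-Tate vanishing into a strong horizontal vanishing line in the $H$-HFPSS via \cref{prop:isohfptate}, and then runs the Sylow retract argument in the HFPSS --- in effect merging \cref{lem:psylowTate}, \cref{lem:psylow}, and a step of the proof of \cref{thm:vanishingallfinite} into a single argument. Both routes are sound; yours delivers an extra conclusion (the HFPSS vanishing line, which the paper only needs later), but at this stage the comparison via \cref{prop:isohfptate} can be dropped entirely by working in the Tate spectral sequence throughout, giving the paper's shorter proof.
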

\begin{proof}
The restriction and transfer maps induce the following maps of spectral sequences: 
\[\begin{tikzcd}
K\text{-}\TateSS(X)\ar[r, "\text{res}"] & H\text{-}\TateSS(X) \ar[r, "\text{tr}"] & K\text{-}\TateSS(X).
\end{tikzcd}\]
The composition map $\text{tr} \circ \text{res}$ is the degree-$|K/H|$ map.  Since $|K/H|$ is coprime to $2$ and $X$ is $2$-local, the composition $\text{tr} \circ \text{res}$ is an isomorphism.  This exhibits the $RO(K)$-grated Tate spectral sequence as a retract of the $RO(H)$-graded Tate spectral sequence.  The statement of the lemma follows.  
\end{proof}

\begin{lem}\label{lem:TateVanishingDBPG} \hfill
\begin{enumerate}
\item At height $h=2^{n-1}m$, the unit class in the $RO(C_{2^n})$-graded Tate spectral sequence for $(N_{C_2}^{C_{2^n}} \bar{v}_h)^{-1} \BPCn$ must be killed on or before the $\E_{2^{h+n}-2^n+1}$-page. 
\item At height $h=4k-2$, the unit class in the $RO(Q_8)$-graded Tate spectral sequence for $(N_{C_2}^{Q_8} \bar{v}_h)^{-1} \BPQ$ must be killed on or before the $\E_{2^{h+3}-7}$-page.
\end{enumerate}
\end{lem}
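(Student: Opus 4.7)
The plan is to transport the vanishing of the class $z := N_{C_2}^G(\bar{v}_h) \cdot a_V^k$ from the $G$-slice spectral sequence of $\BPG$ (established in Theorem \ref{theorem:SliceElementVanishing}) to the $G$-Tate spectral sequence of the localized spectrum $(N_{C_2}^G \bar{v}_h)^{-1}\BPG$, and then use invertibility to trade this for a vanishing of the unit class. Here $(V, k) = (\lambda_m, 2^{m-2}(2^{h+1}-1))$ when $G = C_{2^m}$, and $(V, k) = (\mathbb{H}, 2^{h+1}-1)$ when $G = Q_8$, so that $z$ is exactly the norm $N_{C_2}^G(\bar{v}_h \cdot a_{\sigma_2}^{2^{h+1}-1})$ via the formulas in Proposition \ref{prop:normofhomologyclass}.

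First I would compose the maps in the Tate diagram of spectral sequences~\eqref{diag:TateDiagramSS} with the map induced by the localization $\BPG \to (N_{C_2}^G \bar{v}_h)^{-1}\BPG$ to produce a natural filtration-preserving map of $RO(G)$-graded multiplicative spectral sequences
\[
\SliceSS(\BPG) \longrightarrow \HFPSS(\BPG) \longrightarrow \TateSS(\BPG) \longrightarrow \TateSS((N_{C_2}^G\bar{v}_h)^{-1}\BPG).
\]
By Theorem \ref{theorem:SliceElementVanishing}, the class $z$ is a $d_r$-boundary in the source for some $r \leq N_{h, H} - 1$. By naturality of spectral sequence maps, the image of $z$ in the target is either a $d_r$-boundary or has already died on an earlier page as a strictly shorter boundary relation.

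Next I would exploit the invertibility of $z$ in the localized Tate spectral sequence: the Euler class $a_V$ is invertible because $\tilde EG \simeq S^{\infty V}$ (as discussed at the end of Section \ref{sec:Comparison}), and $N_{C_2}^G(\bar{v}_h)$ is invertible by construction of the localization. Since $z$ is a boundary on some $E_r$ with $r \leq N_{h, H} - 1$, it is also a $d_r$-cycle by $d_r^2 = 0$, and the Leibniz rule applied inductively to $z \cdot z^{-1} = 1$ on each earlier page implies that $z^{-1}$ likewise survives to $E_r$ as a $d_r$-cycle. Writing $d_r(y) = z$ and multiplying by $z^{-1}$ then yields $d_r(y \cdot z^{-1}) = 1$, so the unit class dies on or before the $E_{N_{h, H}}$-page.

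The main obstacle is the invertibility bookkeeping of the last paragraph. I would need to verify carefully that the inverse of an invertible cycle remains a cycle on each page, and that naturality rules out the problematic scenario in which the image of $z$ might have died by supporting a shorter differential rather than by being a boundary. The first point is a standard Leibniz-rule calculation; the second fails because, by naturality, any shorter differential supported by the image of $z$ would force $z$ itself to support that same differential in the source slice spectral sequence, contradicting that $z$ survives to $E_r$ there. These are standard but fussy multiplicative spectral sequence manipulations.
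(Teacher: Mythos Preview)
Your approach is correct but takes a genuinely different route from the paper. The paper stays at the $C_2$-level as long as possible: it transports the differential $d_{2^{h+1}-1}(u_{2\sigma_2}^{2^{h-1}}) = \bar{v}_h a_{\sigma_2}^{2^{h+1}-1}$ from the $C_2$-slice to the $C_2$-Tate spectral sequence of $i_{C_2}^*\BPG$ using the isomorphism regions of \cref{prop:isoslicehfp} and \cref{prop:isohfptate}, inverts $\bar{v}_h$ and $a_{\sigma_2}$ to produce an explicit differential $d_{2^{h+1}-1}(\bar{v}_h^{-1}u_{2\sigma_2}^{2^{h-1}}a_{\sigma_2}^{1-2^{h+1}}) = 1$ hitting the unit in the $C_2$-Tate spectral sequence of the localization, and only then applies the norm structure \cref{thm:normdiff} with $x=1$ to push this up to the $G$-Tate spectral sequence. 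You instead front-load the norm via \cref{theorem:SliceElementVanishing} (which already normed inside the $G$-slice spectral sequence) and rely only on naturality of the composite $\SliceSS(\BPG)\to\TateSS((N_{C_2}^G\bar{v}_h)^{-1}\BPG)$, never the isomorphism regions, to carry the boundary relation forward. The paper's ordering buys a concrete differential hitting $1$, so the final norming step is immediate; your ordering avoids the isomorphism-region check but costs the invertible-element bookkeeping you flag in your last paragraph.

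Two small corrections. First, \cref{theorem:SliceElementVanishing} gives $r \leq N_{h,H}$, not $r \leq N_{h,H}-1$; this is harmless for the stated conclusion. Second, your worry that the image of $z$ might support a shorter differential is dispatched more directly than your contrapositive phrasing suggests: since $z$ is a $d_k$-cycle for all $k<r$ in the source, naturality gives $d_k(f(z)) = f(d_k(z)) = 0$ for $k<r$ in the target, so $f(z)$ cannot support any shorter differential.
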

\begin{proof}
For $G = C_{2^n}$ and $Q_8$, consider the map from the $C_2$-slice spectral sequence for $i_{C_2}^*\BPG$ to the $C_2$-Tate spectral sequence for $i_{C_2}^*\BPG$.  \cref{thm:slicediffthm}, combined with the isomorphisms in \cref{prop:isoslicehfp} and \cref{prop:isohfptate}, shows that we have the differential 
\[d_{2^{h+1}-1}(u_{2\sigma_2}^{2^{h-1}})=\bar{v}_h a_{\sigma_2}^{2^{h+1}-1}\]
in the $C_2$-Tate spectral sequence for $i_{C_2}^*\BPG$.  Since $a_{\sigma_2}$ is invertible, after further inverting $\bar{v}_h$, we have the differential 
\[d_{2^{h+1}-1}(\bar{v}_h^{-1}u_{2\sigma_2}^{2^{h-1}}a_{\sigma_2}^{1-2^{h+1}})= 1\]
in the $C_2$-Tate spectral sequence for $i_{C_2}^* (N_{C_2}^{G} \bar{v}_h)^{-1} \BPG$.  Our claims now follow by applying \cref{thm:normdiff} to $(H, G, x, r) = (C_2, C_{2^n}, 1, 2^{h+1}-1)$ and $(C_2, Q_8, 1, 2^{h+1}-1)$.
\end{proof}

\begin{proof}[Proof of \cref{theorem:TateVanishing}]
Let $K = G \cap \mathbb{S}_h$, and let $H$ be a 2-Sylow subgroup of $K$.  By the classification of the finite subgroups of $\mathbb{S}_h$, $H$ is isomorphic to either $C_{2^n}$ or $Q_8$.  We have the equality $N_{h, G} = N_{h, K} = N_{h, H}$ by \cref{def:NhG}.  The $H$-equivariant map
\[(N_{C_2}^H \bar{v}_h)^{-1}\BPH \longrightarrow E_h\]
induces a map of the corresponding Tate spectral sequences.  By naturality and \cref{lem:TateVanishingDBPG}, the unit class in the $H$-Tate spectral sequence for $E_h$ is killed on or before the $\E_{N_{h, H}}$-page.  The multiplicative structure implies that all the classes in the $H$-graded Tate spectral sequence for $E_h$ vanish after the $\E_{N_{h, H}}$-page. By \cref{lem:psylowTate}, the same statement holds for $K$ since $H$ is a $2$-Sylow subgroup of $K$.

To extend this from $K$ to $G$, note that the quotient group $G/K$ can be identified as a subgroup of the Galois group $\Gal(k/\mathbb{F}_2)$ through the inclusion $G \rightarrow \mathbb{G}_h$. Let $k' = k^{G/K}$.  The arguments shown in \cite[Lemma 1.32, Lemma 1.37, and Remark 1.39]{IrinaTopologicalresolution} imply that the $G$-Tate spectral sequence for $E_h$ is a base change from $\WW(k')$ to $\WW(k)$ of the $K$-Tate spectral sequence for $\TateSS(E_h)$.  This means there is an isomorphism 
\[\WW(k) \otimes_{\WW(k')} \hat{H}^*(G,{E_h}_*) \stackrel{\cong}{\longrightarrow} \hat{H}^*(K,{E_h}_*)\]
on the $\E_2$-page, and all the differentials in the $K$-Tate spectral sequence are the $\mathbb{W}(k)$-linear extensions of those in the $G$-Tate spectral sequence.  Consequently, the theorem statement also holds for $G$. 
 


\end{proof}

\begin{rem} \rm \label{rem:TateVanishingModule} 
If $M$ is a $(N_{C_2}^G \bar{v}_h)^{-1}\BPG$-module, its Tate spectral sequence will also be a module over the Tate spectral sequence for $(N_{C_2}^G \bar{v}_h)^{-1}\BPG$.  The same proof as the one used in \cref{theorem:TateVanishing} will apply to show the same vanishing results in the Tate spectral sequence for $M$. 
\end{rem}
   

\section{Horizontal vanishing lines in the homotopy fixed point spectral sequence}\label{sec:HFPSSVanishing}

The vanishing of the Tate spectral sequence (\cref{theorem:TateVanishing}) leads to the existence of strong horizontal vanishing lines in the homotopy fixed point spectral sequences of Lubin--Tate theories.

\begin{thm}\label{thm:vanishingallfinite}
For any height $h$ and any finite subgroup $G \subset \mathbb{G}_h$, there is a strong horizontal vanishing line of filtration $N_{h,G}$ in the $RO(G)$-graded homotopy fixed point spectral sequence for $E_h$. 
\end{thm}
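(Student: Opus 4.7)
The plan is to deduce the strong horizontal vanishing line in the $RO(G)$-graded homotopy fixed point spectral sequence of $E_h$ directly from the Tate vanishing (\cref{theorem:TateVanishing}), using the comparison map established in \cref{prop:isohfptate}. More precisely, I will show that any nonzero class $x \in E_2^{V,s}(\HFPSS(E_h))$ with $s \geq N_{h,H}$ must die on or before the $E_{N_{h,H}+1}$-page. This simultaneously yields both parts of the ``strong horizontal vanishing line'' statement: no class of filtration $\geq N_{h,H}$ survives to $E_\infty$, and no $d_r$-differential exists for $r > N_{h,H}$, since any such differential would have target of filtration $\geq r > N_{h,H}$, which would already be zero on the $E_r$-page.

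The core is a case analysis on $x$. Since $s \geq N_{h,H} > 0$, \cref{prop:isohfptate} identifies $x$ with a nonzero class $x' \in E_2^{V,s}(\TateSS(E_h))$ at the same bidegree. By \cref{theorem:TateVanishing}, $x'$ vanishes on the $E_{N_{h,H}+1}$-page, so either $x'$ supports a differential $d_t(x') = y'$ with $t \leq N_{h,H}$, or $x'$ is hit by a differential $d_t(z') = x'$ with $t \leq N_{h,H}$. In the first case, $x$ has filtration $s > 0$, so the one-to-one correspondence of differentials with nonnegative source filtration in \cref{prop:isohfptate} transports the differential back to give a nonzero $d_t(x)$ in HFPSS, and $x$ dies. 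In the second case, the source $z'$ has filtration $s - t$, which is $\geq N_{h,H} - N_{h,H} = 0$; using the $E_2$-isomorphism (when $s-t > 0$) or the $E_2$-surjection (when $s-t = 0$) of \cref{prop:isohfptate}, choose any lift $z$ of $z'$ in HFPSS. Since $z$ has nonnegative filtration, the correspondence transports $d_t(z') = x'$ to a differential $d_t(z) = \tilde{x}$ in HFPSS, and naturality forces $\tilde{x} = x$ because the comparison map is an isomorphism at filtration $s > 0$. In either case $x$ dies at the $E_{t+1}$-page, hence by $E_{N_{h,H}+1}$.

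The main subtlety to manage is filtration bookkeeping: the one-to-one correspondence in \cref{prop:isohfptate} only applies to differentials whose source has nonnegative filtration, so I must check that every differential I transport from the Tate side to the HFPSS side has this property. This is exactly where the hypothesis $s \geq N_{h,H}$ together with $t \leq N_{h,H}$ is used --- it guarantees $s - t \geq 0$ in the second case --- and no other real obstacle arises. With this verified, the argument above establishes the vanishing line, and the collapse after $E_{N_{h,H}}$ is a formal consequence as indicated in the first paragraph.
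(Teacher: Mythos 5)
Your proof is correct and follows essentially the same route as the paper: combine the Tate vanishing (\cref{theorem:TateVanishing}) with the HFPSS-to-Tate comparison (\cref{prop:isohfptate}), observing that the length bound $t \leq N_{h,H}$ plus the filtration bound $s \geq N_{h,H}$ keeps all relevant sources in nonnegative filtration where the one-to-one correspondence of differentials applies. The one small streamlining in your version is that you apply \cref{theorem:TateVanishing} directly to the $RO(G)$-graded spectral sequences, whereas the paper first invokes \cref{lem:psylow} to reduce to the Sylow 2-subgroup $H$ and then works with the $H$-equivariant spectral sequences; since \cref{theorem:TateVanishing} is already stated for general finite $G$, your shortcut is legitimate and makes \cref{lem:psylow} unnecessary for this particular theorem.
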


\begin{lem}\label{lem:psylow}
Let $K$ be a finite group and $H \subset K$ a $2$-Sylow subgroup.  For a $2$-local $K$-spectrum $X$, if the $RO(H)$-graded homotopy fixed point spectral sequence for $X$ has a vanishing line $\mathcal{L}_H$, then the $RO(K)$-graded homotopy fixed point spectral sequence for $X$ will also have $\mathcal{L}_H$ as a vanishing line. 
\end{lem}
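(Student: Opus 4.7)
The plan is to mimic the proof of \cref{lem:psylowTate} essentially verbatim, with the homotopy fixed point spectral sequence replacing the Tate spectral sequence. First I would set up the restriction and transfer maps
\[
\begin{tikzcd}
G\text{-}\HFPSS(X)\ar[r, "\mathrm{res}"] & H\text{-}\HFPSS(X) \ar[r, "\mathrm{tr}"] & G\text{-}\HFPSS(X),
\end{tikzcd}
\]
both of which preserve the filtration.  Their composition is the degree-$|G/H|$ self-map on the $G$-HFPSS, which is an isomorphism because $|G/H|$ is coprime to $p$ and $X$ is $p$-local.  This exhibits the $G$-equivariant HFPSS as a retract of the $H$-equivariant HFPSS: every nonzero class $x$ on any page of the $G$-HFPSS has nonzero image $y = \mathrm{res}(x)$ in the $H$-HFPSS sitting at the same filtration.

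Next I would run the retract argument to transport the vanishing line from the $H$-HFPSS to the $G$-HFPSS.  Let $x$ be a nonzero class of filtration $s \geq L_H$ in the $G$-HFPSS.  Its image $y = \mathrm{res}(x)$ is a nonzero class of the same filtration in the $H$-HFPSS, so by hypothesis $y$ either supports or is hit by a differential of length at most $L_H$.  In the first case, naturality of $\mathrm{res}$ on differentials gives $\mathrm{res}(d_r(x)) = d_r(y) \neq 0$, so $d_r(x) \neq 0$ and $x$ must support a differential of length $\leq L_H$.  In the second case, if $d_r(w) = y$ in the $H$-HFPSS, then naturality of $\mathrm{tr}$ yields $d_r(\mathrm{tr}(w)) = \mathrm{tr}(y) = (\mathrm{tr} \circ \mathrm{res})(x)$, which differs from $x$ by an invertible scalar, so $x$ is killed in the $G$-HFPSS.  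In either case $x$ cannot survive above $L_H$ to the $E_\infty$-page.

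To upgrade the conclusion to the strong form (collapse beyond the $E_{L_H}$-page), the same pair of naturality arguments suffices: a hypothetical $d_r$-differential of length $r > L_H$ in the $G$-HFPSS would, under $\mathrm{res}$, produce a $d_r$-differential of the same length in the $H$-HFPSS, contradicting the corresponding collapse statement there.  I do not expect any genuine obstacle; the whole argument is bookkeeping parallel to \cref{lem:psylowTate}.  The only point requiring minor care is verifying that restriction and transfer induce maps of $RO(G)$- and $RO(H)$-graded spectral sequences compatibly with the Postnikov filtration on $F(EG_+, X)$ and $F(EH_+, X)$, so that both $\mathrm{res}$ and $\mathrm{tr}$ genuinely commute with the differentials at every filtration — but this is exactly the compatibility already used in \cref{lem:psylowTate}.
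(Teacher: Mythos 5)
Your proposal is correct and follows essentially the same route as the paper: set up restriction and transfer, observe that their composite is the degree-$|G/H|$ isomorphism on the $p$-local spectrum $X$, exhibit $G$-$\HFPSS(X)$ as a retract of $H$-$\HFPSS(X)$, and then run the naturality-of-differentials bookkeeping exactly as in \cref{lem:psylowTate}. The paper's proof of \cref{lem:psylow} is even terser (it just cites the retract and says ``it follows''), so your more explicit unwinding of the ``supports a differential'' and ``is hit by a differential'' cases is a faithful expansion of the same argument.
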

\begin{proof}
The proof is analogous to that of \cref{lem:psylowTate}.  The restriction and transfer maps induce the following maps of spectral sequences: 
\[\begin{tikzcd}
K\text{-}\HFPSS(X) \ar[r,"\text{res}"] & H\text{-}\HFPSS(X) \ar[r, "\text{tr}"]& K\text{-}\HFPSS(X).
\end{tikzcd}\]
The composition map $\text{tr} \circ \text{res}$ is the degree-$|K/H|$ map.  Since $|K/H|$ is coprime to $2$ and $X$ is $2$-local, the composition $\text{tr} \circ \text{res}$ is an isomorphism.  This implies that $K\text{-}\HFPSS(X)$ is a retract of $H\text{-}\HFPSS(X)$.  It follows that the vanishing line in $H$-$\HFPSS(X)$ will force the same vanishing line in $K$-$\HFPSS(X)$. 
\end{proof}

\begin{proof}[Proof of \cref{thm:vanishingallfinite}]
Let $K = G \cap \mathbb{S}_h$, and let $H$ be a 2-Sylow subgroup of $K$.  Note that $N_{h, G} = N_{h, H}$ by \cref{def:NhG}.  By \cref{lem:psylow} and \cite[Lemma 1.32, Lemma 1.37, and Remark 1.39]{IrinaTopologicalresolution}, it suffices to prove the that the statement holds for $H$.  

Consider the map 
\[H\text{-}\HFPSS(E_h) \longrightarrow H\text{-}\TateSS(E_h).\]
By \cref{prop:isohfptate}, this map induces an isomorphism of classes above filtration 0 and a one-to-one correspondence of differentials whose sources are in non-negative filtrations.  

By \cref{theorem:TateVanishing}, all the classes in the Tate spectral sequence vanish after the $\E_{N_{h,H}}$-page.  In particular, this implies that the longest differential is of length at most $N_{h,H}$, and any class of filtration at least $N_{h, H}$ must die from a differential whose source and target both have nonnegative filtrations.  Combined with the isomorphism in \cref{prop:isohfptate}, this implies that the homotopy fixed point spectral sequence collapses after the $\E_{N_{h, H}}$-page, and there is a strong horizontal vanishing line of filtration $N_{h, H}$.  
\end{proof}

\begin{cor} \label{rem:HFPSSVanishingModule}
For any $(N_{C_2}^G \bar{v}_h)^{-1} \BPG$-module $M$, there is a strong horizontal vanishing line of filtration $N_{h, G}$ in the $RO(G)$-graded homotopy fixed point spectral sequence for $M$.
\end{cor}
\begin{proof}
By \cref{rem:TateVanishingModule}, the proof is the same as the proof of \cref{thm:vanishingallfinite}.  
\end{proof}

\begin{cor}\label{cor:uniformvanishing}
For any $K(h)$-local finite spectrum $Z$, the homotopy fixed point spectral sequence 
\[H^s(G, E_tZ) \Longrightarrow \pi_{t-s}(E^{hG} \wedge Z)\]
has a strong horizontal vanishing line of filtration $N_{h, G}$. 
\end{cor}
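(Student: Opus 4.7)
The plan is to derive the corollary directly from \cref{rem:HFPSSVanishingModule} by recognizing $E_h \wedge Z$ as a $(N_{C_2}^G \bar{v}_h)^{-1}\BPG$-module in $G$-spectra. Give $Z$ the trivial $G$-action; then the spectral sequence in the statement is the integer-graded portion of the $RO(G)$-graded homotopy fixed point spectral sequence of $E_h \wedge Z$. Because any $K(h)$-local finite spectrum $Z$ is equivalent to $L_{K(h)}W$ for some finite $W$, and because $E_h \wedge W$ is already $K(h)$-local, we may replace $Z$ by a finite spectrum. Hence $(E_h \wedge Z)^{hG} \simeq E_h^{hG} \wedge Z$, matching the stated abutment.

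The key observation is that $E_h \wedge Z$ carries a $(N_{C_2}^G \bar{v}_h)^{-1}\BPG$-module structure via its first factor. By the Hahn--Shi equivariant orientation recalled at the start of \cref{sec:TateVanishing}, the $G$-equivariant unit map $\BPG \to E_h$ factors through $(N_{C_2}^G \bar{v}_h)^{-1}\BPG$ since $\bar{v}_h$, and hence $N_{C_2}^G \bar{v}_h$, acts invertibly on $E_h$. Smashing with $Z$ on the right preserves this module structure, so \cref{rem:HFPSSVanishingModule} applies to $M = E_h \wedge Z$ and yields a strong horizontal vanishing line of filtration $N_{h, H}$ in the $RO(G)$-graded homotopy fixed point spectral sequence, where $H$ is a Sylow $2$-subgroup of $G$. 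Restricting to the integer-graded page recovers precisely the spectral sequence of the corollary.

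The main point requiring care, rather than a serious obstacle, is to confirm that the $G$-equivariant $(N_{C_2}^G \bar{v}_h)^{-1}\BPG$-module structure on $E_h$ transfers cleanly to $E_h \wedge Z$, and that the identification of abutments is valid in the $K(h)$-local setting. Both reduce to the compatibility of the $E_h$-module structure with smashing and $K(h)$-localization, together with the fact that homotopy fixed points commute with smashing against a finite spectrum.
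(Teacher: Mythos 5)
Your proposal is correct and matches the approach the paper intends: the paper states this corollary immediately after \cref{rem:HFPSSVanishingModule} without written proof, precisely because (as you spell out) $E_h \wedge Z$ inherits a $(N_{C_2}^G \bar{v}_h)^{-1}\BPG$-module structure from $E_h$, finite $Z$ lets homotopy fixed points commute with the smash, and one then reads off the integer-graded part of the $RO(G)$-graded vanishing line. You have supplied the details the paper leaves implicit, with no gaps.
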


\begin{rem} \rm
The existence of concrete strong horizontal vanishing lines (as given by \cref{thm:vanishingallfinite}) is very useful for equivariant computations (see discussion after \cref{thm:introThm1} in 
\cref{subsec:1.1}). In \cite{DuanKongLiLuWangQ8}, \cref{thm:vanishingallfinite}, combined with the equivariant structures present in the homotopy fixed point spectral sequence, is utilized to compute $E_2^{hG_{24}}$.  The authors also believe that \cref{thm:vanishingallfinite} can be employed to establish new $RO(G)$-graded periodicities for $E_h$. 
\end{rem}

\begin{example}\rm
When $G= C_2$ and at all heights $h$, there is a $d_{2^{h+1}-1}$-differential in the $C_2$-homotopy fixed point spectral sequence for $E_h$, and there is a nonzero class $\bar{v}_h^2 a_\sigma^{2^{h+1}-2}$ in bidegree $(2^{h+1}-2, 2^{h+1}-2)$.  Therefore, the vanishing line in \cref{thm:vanishingallfinite} is sharp for $E_h^{hC_2}$. 
\end{example}

\begin{example} \rm
The computations in \cite{HHRC_4} implies that in the $RO(C_4)$-homotopy fixed point spectral sequence for $E_2$, there exists a $d_{13}$-differential 
\[d_{13}(N_{2}^{4}(\bar{t}_1)^5 u_{4\lambda}u_{4\sigma}a_{\lambda}a_{\sigma} )= N_{2}^{4}(\bar{t}_1)^8 u_{8\sigma} a_{8\lambda}\]
(where we let $\lambda = \lambda_2$ and $N_2^4(-)= N_{C_2}^{C_4}(-)$ for convenience).  Moreover, the class $N_2^4(\bar{t}_1)^{10}u_{4\lambda}u_{10\sigma}a_{6\lambda}$ in bidegree $(28, 12)$ (representing $\kappa^2$) that survives to the $\E_\infty$-page.  Therefore, our vanishing line is sharp for $E_2^{hC_4}$. 
\end{example}

\begin{example}\rm
The computations in \cite{HSWXC_4} implies that in the $RO(C_{4})$-homotopy fixed point spectral sequence for $E_4$, there is a $d_{61}$-differential 
\[d_{61}(N_2^4(\bar{t}_2)^{11}u_{16\lambda}u_{32\sigma}a_{17\lambda}a_{\sigma})=N_2^4(\bar{t}_2)^{16}u_{48\sigma}a_{48\lambda}\]
Moreover, the class $N_2^4(\bar{t}_2)^{24}N_2^4(\bar{t}_1)u_{44\lambda}u_{74\sigma}a_{30\lambda}$ in bidegree $(236, 60)$ survives to the $\E_\infty$-page.  Therefore, our vanishing line is sharp for $E_4^{hC_4}$.  
\end{example}

\begin{example}\rm
Consider the $RO(Q_8)$-homotopy fixed point spectral sequence for $E_2$.  \cref{thm:vanishingallfinite} implies that there is a strong horizontal vanishing line of filtration 25.  However, the actual vanishing line is of filtration 23.  More specifically, by Bauer's computation \cite{bauertmf}, there is a $d_{23}$-differential 
\[d_{23}(\eta \Delta^5)=\bar{\kappa}^6,\]
where $\bar{\kappa}$ is represented by the class $g$ in \cite{bauertmf}. This implies that in the Tate spectral sequence, there is a $d_{23}$-differential 
\[d_{23}(\eta \Delta^{5}\bar{\kappa}^{-6})=1.\] 
By the same argument as the one given in the proof of \cref{thm:vanishingallfinite}, the sharpest vanishing line in the homotopy fixed point spectral sequence is of filtration 23.  The bounds given in \cref{thm:vanishingallfinite} for $Q_8$ has been improved in \cite{DuanKongLiLuWangQ8} to account for the sharpness in this case.
\end{example}

\section{Horizontal vanishing lines in the slice spectral sequence}\label{sec:VanishingSlice}
We will now prove explicit horizontal vanishing lines for the slice spectral sequences of $(N_{C_2}^G \bar{v}_h)^{-1}\BPG$-modules.  

\begin{thm}\label{thm:sliceVanishing}
When $G = C_{2^n}$ or $Q_8$, the $RO(G)$-graded slice spectral sequence for any $(N_{C_2}^G \bar{v}_h)^{-1}\BPG$-module $M$ admits a horizontal vanishing line of filtration $N_{h, G}$.  
\end{thm}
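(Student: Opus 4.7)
The plan is to mirror the proof of \cref{thm:vanishingallfinite}, substituting the pair $(\SliceSS, \LSliceSS)$ for the pair $(\HFPSS, \TateSS)$ throughout. The starting input is \cref{theorem:SliceElementVanishing}: in $\SliceSS(\BPG)$ there is a differential of length at most $N_{h,G}$ whose target is $N_{C_2}^G(\bar{v}_h)\,a_\star^{k}$, where $(\star,k)=(\lambda_m,\,2^{m-2}(2^{h+1}-1))$ when $G=C_{2^m}$ and $(\star,k)=(\mathbb{H},\,2^{h+1}-1)$ when $G=Q_8$.

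First, I would pass to the localized slice spectral sequence and show that the unit class in $\LSliceSS(M)$ is killed by a differential of length at most $N_{h,G}$. Since $\tilde{E}G \simeq S^{\infty\star}$, the Euler class $a_\star$ is invertible in $\LSliceSS$, and $N_{C_2}^G(\bar{v}_h)$ is invertible because $M$ is a $(N_{C_2}^G\bar{v}_h)^{-1}\BPG$-module. Multiplying the differential above by $(N_{C_2}^G(\bar{v}_h))^{-1}a_\star^{-k}$ produces a differential of length at most $N_{h,G}$ killing $1$. Because $\LSliceSS(M)$ is a module over $\LSliceSS((N_{C_2}^G\bar{v}_h)^{-1}\BPG)$, multiplicativity forces every class in $\LSliceSS(M)$ to vanish after the $E_{N_{h,G}}$-page.

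Next, I would transfer this vanishing to $\SliceSS(M)$ via the exact triangle of spectral sequences
\[\HOSS(M)\longrightarrow \SliceSS(M)\longrightarrow \LSliceSS(M)\]
arising from the cofiber sequence of towers $EG_+\wedge P^\bullet M\to P^\bullet M\to \tilde{E}G\wedge P^\bullet M$, in conjunction with the comparison $\SliceSS(M)\to\HFPSS(M)$ of \cref{prop:isoslicehfp} and the strong horizontal vanishing line in $\HFPSS(M)$ at filtration $N_{h,G}$ (\cref{rem:HFPSSVanishingModule}). For classes in $\SliceSS(M)$ at filtration $s\geq N_{h,G}$ lying in the isomorphism region $\tau(V-s-1)>|V|$, the $\HFPSS$ vanishing transports directly via the bijection of differentials. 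For classes at filtration $\geq N_{h,G}$ outside the isomorphism region, the fact that $\LSliceSS(M)$ vanishes there forces them to lift from $\HOSS(M)$; multiplicativity with the differential killing $a_\star^k$ in $\SliceSS((N_{C_2}^G\bar{v}_h)^{-1}\BPG)$ (obtained from Step 1 by canceling the invertible $N_{C_2}^G(\bar{v}_h)$) then kills such classes.

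The main obstacle is this last step: controlling classes in $\SliceSS(M)$ at filtration $\geq N_{h,G}$ that lie outside the isomorphism region of \cref{prop:isoslicehfp}. Whereas the comparison $\HFPSS\to\TateSS$ of \cref{prop:isohfptate} is an isomorphism above filtration zero, the analogous comparison $\SliceSS\to \LSliceSS$ is an isomorphism only in a more restrictive $RO(G)$-graded region, so one must trace through the exact triangle of spectral sequences and use the module structure together with the precise form of the $a_\star^k$-killing differential to close the gap.
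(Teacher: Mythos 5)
Your approach is genuinely different from the paper's, and while the first two steps are correct, the final step has a gap that you yourself flag as ``the main obstacle'' --- and the proposed resolution does not close it.

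What the paper actually does is simpler and avoids $\LSliceSS$ entirely. The key ingredient is \cref{lem:MUGCofree}: since $\Phi^H(N_{C_2}^G \bar{v}_h) = 0$ for every nontrivial $H \subseteq G$, every $(N_{C_2}^G \bar{v}_h)^{-1}\BPG$-module $M$ is cofree, so $\pi^G_\star M \cong \pi^G_\star F(EG_+, M)$. The map of towers $P^\bullet M \to F(EG_+, P^\bullet M)$ then implies that a class in $\SliceSS(M)$ of filtration $s$ that survives to $E_\infty$ would detect an element of $\pi^G_\star M$ which, in $\HFPSS(M)$, is detected in filtration $\geq s$. If $s \geq N_{h,G}$ this contradicts \cref{rem:HFPSSVanishingModule}. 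Note this only needs to rule out survival to $E_\infty$, which is exactly what ``horizontal vanishing line'' (as opposed to \emph{strong} horizontal vanishing line) asserts; your approach is implicitly aiming for the stronger collapse statement, which is both harder and unnecessary here.

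The gap in your argument is the passage from the vanishing of $\LSliceSS(M)$ back to $\SliceSS(M)$. The cofiber sequence of towers $EG_+ \wedge P^\bullet M \to P^\bullet M \to \tilde{E}G \wedge P^\bullet M$ induces long exact sequences of homotopy groups at each stage, but this does \emph{not} yield an exact triangle of spectral sequences page-by-page beyond $E_2$; ``lifting to $\HOSS(M)$'' is not a well-defined operation once differentials have been run. Even granting such a lift, multiplying a class $x$ by the source of the $a_\star$-killing differential and invoking Leibniz only produces a differential on $x$ if the relevant product is nonzero on the current page, which you have not established. Unlike the comparison $\HFPSS \to \TateSS$ of \cref{prop:isohfptate}, which is an isomorphism in all positive filtrations, the comparison $\SliceSS \to \LSliceSS$ has no analogously clean isomorphism region (the paper proves none), so the strategy of mirroring the proof of \cref{thm:vanishingallfinite} with $(\SliceSS, \LSliceSS)$ in place of $(\HFPSS, \TateSS)$ breaks down precisely at this point. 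The missing idea is cofreeness: once you know $M$ is cofree, you never need $\LSliceSS$ at all.
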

\begin{lem}\label{lem:MUGCofree}
When $G = C_{2^n}$ or $Q_8$, any $(N_{C_2}^G \bar{v}_h)^{-1}\BPG$-module is cofree.  
\end{lem}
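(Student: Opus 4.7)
The plan is to prove the lemma by first showing that the ring $R := (N_{C_2}^G \bar{v}_h)^{-1}\BPG$ is itself cofree, and then deducing cofreeness for every $R$-module from the module structure. For the latter reduction: applying $F(-, M)$ to the cofiber sequence $EG_+ \to S^0 \to \tilde{E}G$ presents the natural map $M \to F(EG_+, M)$ with fiber $F(\tilde{E}G, M)$, which is a module over $F(\tilde{E}G, R)$. Once $R$ is shown to be cofree, i.e. $F(\tilde{E}G, R) \simeq 0$, any $R$-module $M$ satisfies $F(\tilde{E}G, M) \simeq 0$ via the unit action, so $M$ is cofree.

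To show $R$ is cofree, I would combine the Tate vanishing established in \cref{sec:TateVanishing} with a Borel-type property for $R$. On the one hand, \cref{lem:TateVanishingDBPG} shows that the unit class in the $RO(G)$-graded Tate spectral sequence of $R$ dies on a finite page, and multiplicativity then forces all classes on $E_\infty$ to vanish; hence $\underline{\pi}_\star R^{tG} = 0$, equivalently $\tilde{E}G \wedge F(EG_+, R) \simeq 0$ as a $G$-spectrum. On the other hand, I claim the Borel property $\Phi^H R \simeq 0$ for every nontrivial $H \leq G$, i.e. $\tilde{E}G \wedge R \simeq 0$. Granting both, the fiber sequence $F(\tilde{E}G, R) \to R \to F(EG_+, R)$, together with the identifications $F(\tilde{E}G, R) \simeq \tilde{E}G \wedge F(\tilde{E}G, R)$ (from $EG_+$-acyclicity of $F(\tilde{E}G, R)$, which holds because its underlying spectrum is $F(\tilde{E}G^e, R^e) = F(*, R^e) \simeq 0$) and $\tilde{E}G \wedge F(\tilde{E}G, R) \simeq \tilde{E}G \wedge R$ (obtained by smashing the fiber sequence with $\tilde{E}G$ and using Tate vanishing), yields $F(\tilde{E}G, R) \simeq \tilde{E}G \wedge R \simeq 0$, so $R$ is cofree.

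The heart of the argument is the Borel property, which I would establish by showing $\Phi^{C_2}\bar{v}_h = 0$ in $\pi_*\Phi^{C_2}\BPR$ and propagating via the norm. For the base case: the slice differential $d_{2^{h+1}-1}(u_{2\sigma_2}^{2^{h-1}}) = \bar{v}_h a_{\sigma_2}^{2^{h+1}-1}$ from \cref{thm:slicediffthm}, after inverting $a_{\sigma_2}$ (which converts the slice spectral sequence into one computing $\pi_*\Phi^{C_2}\BPR$), becomes $d_{2^{h+1}-1}(u_{2\sigma_2}^{2^{h-1}} a_{\sigma_2}^{-(2^{h+1}-1)}) = \bar{v}_h$, so $\bar{v}_h$ is a boundary and $\Phi^{C_2}\bar{v}_h = 0$. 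For a nontrivial $H \leq G$, since $C_2$ is central and is contained in every nontrivial subgroup of both $G = C_{2^m}$ and $G = Q_8$, the Mazur-style double-coset formula for geometric fixed points of norms identifies $\Phi^H(N_{C_2}^G \bar{v}_h)$ as a tensor product indexed by $H\backslash G / C_2$ of copies of $\Phi^{C_2}\bar{v}_h$, which vanishes. Consequently $\Phi^H R = (\Phi^H N_{C_2}^G \bar{v}_h)^{-1}\Phi^H \BPG = 0$ since we are inverting a zero element. The main obstacle will be carefully applying Mazur's formula in the non-abelian case $G = Q_8$ and confirming that the image of $N_{C_2}^G \bar{v}_h$ under $\Phi^H$ is genuinely the claimed tensor product, so that the vanishing of one factor kills the localization.
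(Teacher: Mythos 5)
Your proof reaches the correct conclusion and relies on the same core computation (namely that $\Phi^H(N_{C_2}^G\bar{v}_h)$ is null for every nontrivial $H$), but takes a genuinely different path to cofreeness than the paper. The paper simply cites \cite[Corollary~10.6]{HHR}, which asserts that for a commutative $G$-ring spectrum $R$ with $\Phi^H R$ contractible for all nontrivial $H$, every $R$-module is cofree; once this is invoked the entire content of the lemma is the geometric-fixed-points vanishing. You instead re-derive a version of that corollary by hand: you reduce from a general module $M$ to $R$ itself (because $F(\tilde{E}G, M)$ is a module over $F(\tilde{E}G, R)$), and you kill $F(\tilde{E}G, R)$ by combining the Borel property $\tilde{E}G \wedge R \simeq 0$ with the Tate vanishing $\tilde{E}G \wedge F(EG_+, R) \simeq 0$ supplied by \cref{lem:TateVanishingDBPG}. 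That reasoning is correct and self-contained, and there is no circularity since \cref{sec:TateVanishing} does not depend on the present lemma, but it uses strictly more input than the paper's one-line citation. Your worry about the double coset formula for $Q_8$ is unfounded: $C_2$ is a central subgroup contained in every nontrivial subgroup of both $C_{2^m}$ and $Q_8$, so every factor in the decomposition of $\Phi^H N_{C_2}^G$ collapses to a copy of $\Phi^{C_2}$, which is exactly what the paper tacitly uses when it writes $\Phi^H(N_{C_2}^H\bar{v}_h)=\Phi^{C_2}\bar{v}_h$.

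The one genuine soft spot is your derivation of $\Phi^{C_2}\bar{v}_h = 0$. You invert $a_{\sigma_2}$ in the slice spectral sequence and observe that the filtration-$0$ class $\bar{v}_h$ is hit by a differential. But a class dying in the localized slice spectral sequence does not by itself show that the corresponding element of $\pi_{2^h-1}\Phi^{C_2}\BPR$ is zero — it only says the element is not detected in filtration $0$, and a priori it could be detected by a nonzero class in some higher filtration, which you would still have to rule out. The clean argument, which the paper uses implicitly, is that $\Phi^{C_2}\BPR \simeq H\mathbb{F}_2$ (a summand of $\Phi^{C_2}\MUR \simeq MO$), so its homotopy is concentrated in degree $0$, while $\Phi^{C_2}\bar{v}_h$ lies in the odd degree $2^h - 1$ and therefore must vanish.
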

\begin{proof}
By \cite[Corollary~10.6]{HHR}, we need to show that $\Phi^{H} (N_{C_2}^G \bar{v}_h)^{-1}\BPG$ is contractible for all non-trivial $H \subset G$.  To do so, it suffices to check that $\Phi^H(N_{C_2}^G \bar{v}_h) = 0$ for all nontrivial $H \subset G$.  Recall that $\bar{v}_h \in \pi_{(2^h-1)\rho_2}^{C_2} \BPG$ is defined to be the composition 
\[\begin{tikzcd}S^{(2^{h}-1)\rho_2} \ar[r, "\bar{v}_h"] & \BPR  \ar[r]& i_{C_2}^*\BPG.
\end{tikzcd}\]
The claim now follows from the fact that for the class $\bar{v}_h \in \pi_{(2^h-1)\rho_2}^{C_2} \BPR$, $\Phi^{C_2} (\bar{v}_h) = 0$ and therefore
\[\Phi^{H} (N_{C_2}^H \bar{v}_h) = \Phi^{C_2} (\bar{v}_h) = 0\] 
for all nontrivial $H \subset G$. 
\end{proof}

\begin{proof}[Proof of \cref{thm:sliceVanishing}]
Since the spectrum $M$ is cofree by \cref{lem:MUGCofree}, both the slice spectral sequence and the homotopy fixed point spectral sequence converge to the same homotopy groups: 
\[\begin{tikzcd}
\SliceSS(M) \ar[r] \ar[d, Rightarrow] & \HFPSS(M) \ar[d, Rightarrow]\\ 
\pi_\star^{G} M \ar[r, "="] & \pi_\star^{G} F(E{G}_+, M).
\end{tikzcd}\]
Consider a class $x$ on the $\E_2$-page of the slice spectral sequence.  We claim that if the filtration of $x$ is at least $N_{h, G}$, then $x$ cannot survive to the $\E_\infty$-page.  This is because if $x$ survives to represent an element in $\pi_\star^{G} M$, then there must be a class $y$ on the $\E_2$-page of the homotopy fixed point spectral sequence that also survives to represent the same element in 
\[\pi_\star^{G} F(E{G}_+, M) = \pi_\star^{G} M.\]
Moreover, the filtration of $y$ must be at least the filtration of $x$, which is $\geq N_{h, G}$.  This is a contradiction because by \cref{rem:HFPSSVanishingModule}, there is a strong horizontal vanishing line of filtration $N_{h, G}$ in the homotopy fixed point spectral sequence. 
\end{proof}


\section{\texorpdfstring{$E_h^{hG}$}{text}-orientation of real vector bundles}\label{section:application}

In this section, we will use the strong vanishing lines established in \cref{thm:vanishingallfinite} to give an upper bound for $\Theta(h, G)$, the smallest number $d$ such that the $d$-fold direct sum of any real vector bundle is $E_h^{hG}$-orientable.

\begin{defn}\label{defn:orientation}\rm
Let $E$ be a multiplicative cohomology theory with multiplication $\mu_E: E \wedge E \to E$, and $\xi$ a virtual $k$-dimensional real vector bundle over a space $X$.  Denote the Thom spectrum of $\xi$ by $M\xi$.  An \textit{$E$-orientation} for $\xi$ is a class $u: M\xi \rightarrow \Sigma^k E$ (also called a \textit{Thom class}) such that for any map $f:Y\rightarrow X$, the pull-back $u_{f^*(\xi)}: Mf^*(\xi)\to M\xi \to \Sigma^k E$ induces an equivalence 
\begin{equation}\label{eq:CohEquiv}
\begin{tikzcd} F(\Sigma^k Y_{+},E) \ar[r, "\simeq"] &F(Mf^*(\xi),E), \end{tikzcd} \end{equation}
where (\ref{eq:CohEquiv}) is defined by sending a map $g: \Sigma^k Y_+ \to E$ to the composition 
\begin{multline*} Mf^*(\xi) = S^0 \wedge Mf^*(\xi) \xrightarrow{\iota_E \wedge \id}  E \wedge Mf^*(\xi) \xrightarrow{\id \wedge \Delta}  E \wedge Mf^*(\xi) \wedge Y_+  \xrightarrow{\id \wedge u_{f^*(\xi)} \wedge \id}  E \wedge \Sigma^k E \wedge Y_+ \\\xrightarrow{\mu_E \wedge \id}  E \wedge \Sigma^k Y_+ \xrightarrow{\id \wedge g}  E \wedge E \xrightarrow{\mu_E}  E. 
\end{multline*}
Here, $\Delta: Mf^*(\xi) \to Mf^*(\xi) \wedge Y_+$ is the Thom diagonal map.  
\end{defn}
\begin{rem}\rm
If $\xi$ is $E$-oriented, then the equivalence (\ref{eq:CohEquiv}) induces a Thom isomorphism 
\[\begin{tikzcd} E^{*-k}(Y_+) \ar[r, "\cong"] &E^{*}(Mf^*(\xi))\end{tikzcd}\]
for any map $f$.  In particular, when $f$ is the identity map, there is a Thom isomorphism 
\[\begin{tikzcd} E^{*-k}(X_+) \ar[r, "\cong"] &E^{*}(M\xi)\end{tikzcd}.\]
\end{rem}

Note that it follows immediately from \cref{defn:orientation} that for any $E$-oriented bundle $\xi$, its pull back bundle $f^*(\xi)$ is also $E$-oriented.  Our definition also recovers the classical definition of orientations.  More precisely, if we take $Y$ to be a point, then the Thom space of the pull back is $S^k$, and the restriction of the Thom class $u$ under the map 
\[E^k(Th(\xi)) \longrightarrow E^k(S^k)\]
is an $E^*$-module generator for the free rank one module $E^*(S^k)$.

For $X$ a non-equivariant spectrum, we can treat it as a $G$-spectrum equipped with the trivial $G$-action.  We have the equivalence 
\[F(EG_+, F(X, E_h))^G \simeq F(X, F(EG_+, E_h))^G \simeq F(X, F(EG_+, E_h)^G) = F(X, E_h^{hG}).\]
This equivalence allows us to use the homotopy fixed point spectral sequence to compute $(E_h^{hG})^*(X)$.  The $\E_2$-page of this homotopy fixed point spectral sequence is 
\[\E_2^{s, t} = H^s(G; E_h^tX) \Longrightarrow (E_h^{hG})^{t+s}(X).\]

Let $\gamma$ be the universal bundle on $BO$ (of virtual dimension zero).  The direct sum operation on bundles over $BO$ induces a multiplication map $m_2: BO\times BO\rightarrow BO$, which can be extended to form an $\mathbb{E}_{\infty}$-structure. Following the approach in \cite[Lemma 1.9]{May72}, we recursively define $m_k=m_2\circ(\text{id}\times m_{k-1})$.  Moreover, we will define $\Delta_n$ to be the diagonal map $BO\rightarrow BO\times \dots \times BO$ ($n$-copies), and denote the composition map $m_n\circ \Delta_n: BO\rightarrow BO$ by $[n]$. 

Let $n\gamma$ denote the pullback bundle $[n]^* \gamma$.  Following Kitchloo--Wilson \cite{KitchlooWilson}, we will denote the Thom spectrum of $n \gamma$ by $MO[n]$.  We set $MO[0]=S^0$ and define 
\[\Pi MO:=\bigvee\limits_{k\geq 0} MO[2k].\]

\begin{lem}\label{lem:multistructure}
The homotopy fixed point spectral sequence for $(E_h^{hG})^*(\Pi MO)$ is a multiplicative spectral sequence whose multiplication is commutative. 
\end{lem}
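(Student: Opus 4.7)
The plan is to exhibit $F(\Pi MO, E_h)$ as a commutative $G$-equivariant ring spectrum and then invoke the standard fact that the homotopy fixed point spectral sequence of such a spectrum is multiplicative with graded-commutative product on its $E_2$-page.

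The first step is to upgrade $\Pi MO$ to a cocommutative, coassociative coalgebra in spectra. The key input is the diagonal $\Delta \colon BO \to BO \times BO$, under which the external Whitney sum of bundles pulls back to the internal one: $\Delta^{*}(n\gamma \boxplus m\gamma) \cong (n+m)\gamma$. Passing to Thom spectra yields maps $MO[n+m] \to MO[n] \wedge MO[m]$, and wedging these over all pairs $(2n,2m)$ gives a comultiplication
\[
\Delta_{\Pi MO} \colon \Pi MO \longrightarrow \Pi MO \wedge \Pi MO.
\]
Since $\Delta$ is invariant under composition with the factor swap on $BO \times BO$, the comultiplication $\Delta_{\Pi MO}$ is cocommutative (up to the usual swap); its coassociativity follows similarly.

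Next, since $E_h$ carries a $G$-equivariant commutative ring structure coming from Goerss--Hopkins--Miller and the equivariant orientation of \cite{realorientationDanny}, the function spectrum $F(\Pi MO, E_h)$ inherits the structure of a commutative $G$-equivariant ring spectrum via the convolution product
\[
F(\Pi MO, E_h) \wedge F(\Pi MO, E_h) \longrightarrow F(\Pi MO \wedge \Pi MO, E_h \wedge E_h) \xrightarrow{F(\Delta_{\Pi MO}, \mu_{E_h})} F(\Pi MO, E_h),
\]
with commutativity following from the cocommutativity of $\Delta_{\Pi MO}$ together with the commutativity of $\mu_{E_h}$. Taking homotopy fixed points and using the identification $F(EG_+, F(\Pi MO, E_h))^{G} \simeq F(\Pi MO, E_h^{hG})$ realizes the spectral sequence in question as the HFPSS of the commutative $G$-equivariant ring spectrum $F(\Pi MO, E_h)$.

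Finally, I would invoke the standard fact that the cellular filtration of $EG_+$ is compatible with the smash product, so that for any commutative ring $G$-spectrum $R$, the associated tower defining the HFPSS inherits a pairing of towers. This produces a multiplicative structure on the HFPSS, and on $E_2$ the pairing is the cup product in group cohomology with coefficients in the graded-commutative ring $E_h^{*}(\Pi MO)$, and so is graded-commutative. The main point to verify with care is that the cocommutativity of $\Delta_{\Pi MO}$ really does transfer to commutativity of the ring structure on $F(\Pi MO, E_h)$ at the spectrum level; once this is in hand, everything else is formal from the multiplicative machinery for HFPSS of commutative ring $G$-spectra.
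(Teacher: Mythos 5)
Your approach differs from the paper's in an important way: you aim to prove that the multiplication on $F(\Pi MO, E_h)$ is commutative at the spectrum level, and then deduce $E_2$-page commutativity formally; the paper constructs the same multiplication on $F(\Pi MO, E_h)$ but proves $E_2$-commutativity directly by identifying the $E_2$-page with $H^*(G, E_h^*(BO_+)[u_2])$ via Thom isomorphisms, where the ring structure is manifestly commutative, without ever asserting commutativity of the convolution product on $F(\Pi MO, E_h)$.

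The genuine gap in your argument is the claim that $\Delta_{\Pi MO}$ is cocommutative. You deduce this from the fact that the space-level diagonal $\Delta\colon BO \to BO\times BO$ is fixed by the factor swap, but this does not automatically Thomify. The map $MO[2i+2j] \to MO[2i]\wedge MO[2j]$ depends on a chosen isomorphism $(2i+2j)\gamma \cong 2i\gamma \oplus 2j\gamma$, and the composite that compares $\Delta_{\Pi MO}$ with $\tau \circ \Delta_{\Pi MO}$ involves the block-swap automorphism of $(2i+2j)\gamma$. One must check that this automorphism induces the identity on the Thom spectrum up to homotopy (this is where the restriction to even ranks $2i,2j$ enters, via determinant $(-1)^{4ij}=1$ placing the block swap in the identity component of the orthogonal group), and the argument has to be run carefully in the stable setting. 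You flag the wrong step as the one needing care: the passage from cocommutativity of the coalgebra to commutativity of the function ring spectrum is formal; what is not formal is the cocommutativity of $\Delta_{\Pi MO}$ itself. Since the paper needs only $E_2$-commutativity, it is cleaner to follow the paper and deduce it directly from the multiplicativity of the Thom classes $u_2^k$, sidestepping this spectrum-level verification entirely.
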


\begin{proof}
In order to ensure that the homotopy fixed point spectral sequence has a multiplicative structure, it suffices to construct a $G$-equivariant map
\[
\varphi: F(\Pi MO,E_h)\wedge F(\Pi MO,E_h)\rightarrow F(\Pi MO,E_h).
\]
We will first construct a map $\Delta: \Pi MO\rightarrow \Pi MO \wedge \Pi MO$.  Once we have constructed $\Delta$, the desired map $\varphi$ will be induced from $\Delta$ by the following composition: 
\begin{equation*}
\begin{aligned}
F(\Pi MO, E_h)\wedge F(\Pi MO, E_h) \longrightarrow F(\Pi MO\wedge \Pi MO, E_h\wedge E_h)&\xrightarrow{\mu_*} F(\Pi MO\wedge \Pi MO, E_h)\\
&\xrightarrow{\Delta^*}F(\Pi MO,E_h).
\end{aligned}
\end{equation*}
Here, $\mu: E_h\wedge E_h\rightarrow E_h$  is the multiplication map.  Consider the maps
\[
BO\xrightarrow{\Delta_2} BO\times BO\xrightarrow{[2i]\times [2j]} BO\times BO \xrightarrow[]{m_2} BO.
\]
These maps induce a map of the corresponding Thom spectra
\begin{equation*}
Th([2i]^*\gamma \oplus [2j]^*\gamma)\longrightarrow MO[2i]\wedge MO[2j].
\end{equation*}
The swap map $\tau: BO\times BO\rightarrow BO\times BO$ induces the following commutative diagram of Thom spectra:
\begin{equation}\label{eq:internalexternal}
\begin{tikzcd}
   Th([2i]^*\gamma \oplus [2j]^*\gamma) \arrow[d] \ar[r] & MO[2i] \wedge MO[2j] \ar[d] \\
  Th([2j]^*\gamma \oplus [2i]^*\gamma) \ar[r] & MO[2j] \wedge MO[2i]. 
    \end{tikzcd}
\end{equation}
Since $BO$ is an $\mathbb{E}_{\infty}$-space, there is a homotopy from  $m_{2i+2j}\circ \Delta_{2i+2j}$ to $m_2\circ (m_{2i}\times m_{2j})\circ\Delta_{2i+2j}$. This produces an equivalence from $MO[2i+2j]$ to $Th([2i]^*\gamma \oplus [2j]^*\gamma)$. Composing this with the map of Thom spectra above, we obtain a map
\begin{equation*}
MO[2i+2j]\rightarrow MO[2i]\wedge MO[2j].
\end{equation*}
By fixing $n$ and combining these maps for all pairs $(i, j)$ such that $i+j = n$, we obtain a map 
\[
MO[2n]\longrightarrow \bigvee\limits_{2i+2j=2n}MO[2i]\wedge MO[2j].
\]
Taking the wedge sum of all such maps for all $n \geq 0$ produces the map $\Delta$:
\[\Delta\colon \Pi MO = \underset{n\geq 0}{\bigvee}MO[2n]\longrightarrow \bigvee\limits_{n\geq 0}\left(\bigvee\limits_{2i+2j=2n}MO[2i]\wedge MO[2j]\right) = \Pi MO \wedge \Pi MO.\]

In order to show that the multiplication on the homotopy fixed point spectral sequence is commutative, it suffices to show that $\Delta$ is co-commutative up to homotopy. Since $BO$ is an $\mathbb{E}_{\infty}$-space, the following diagram commutes up to homotopy:
 
  \begin{center}
    \begin{tikzcd}
       m_{2i+2j}\circ \Delta_{2i+2j}\arrow[r]\arrow[rd] &m_2\circ( m_{2i}\times m_{2j})\circ \Delta_{2i+2j}\arrow[d]\\
 & m_2\circ(m_{2j}\times m_{2i}) \circ \Delta_{2i+2j}.
 \end{tikzcd}
 \end{center} 
Combining the induced homotopy commutative diagram of Thom spectra and diagram~(\ref{eq:internalexternal}) produces the following homotopy commutative diagram of Thom spectra:
 \begin{center}
\begin{tikzcd}
    MO[2i+2j]\arrow[r]\arrow[rd] &Th([2i]^*\gamma \oplus [2j]^*\gamma) \arrow[d] \ar[r] & MO[2i] \wedge MO[2j] \ar[d] \\
 & Th([2j]^*\gamma \oplus [2i]^*\gamma) \ar[r] & MO[2j] \wedge MO[2i]. 
    \end{tikzcd}
\end{center}
It follows from this that the map $\Delta$ is homotopy co-commutative, and therefore $\varphi$ is homotopy commutative. 
\end{proof}



Note that since $\gamma \otimes \mathbb{C} = 2\gamma$ as real vector bundles, $2\gamma$ is $E_h$-oriented, and we have a Thom isomorphism 
\[E_h^*(MO[2]) \cong E_h^*(BO_+) \cdot u_{2}.\]
The construction of $\varphi$ in the proof of \Cref{lem:multistructure} shows that the composition map
\[
MO[2k]\longrightarrow \overbrace{MO[2]\wedge \cdots\wedge MO[2]}^k\xrightarrow{u_2\wedge\cdots\wedge u_2} \overbrace{E_h\wedge\cdots\wedge E_h}^k \xrightarrow{\mu} E_h
\]
is $u_2^k$ in $E_h^*(\Pi MO)$.  We claim that $u_2^k$ is a Thom class for $MO[2k]$.  This is because by iteratively applying adjunction and the Thom isomorphism, we have the equivalences 
\begin{align*}
F(MO[2] \wedge \cdots \wedge MO[2], E_h) &\simeq F(MO[2] \wedge \cdots \wedge MO[2], F(MO[2], E_h)) \\
&\simeq F(MO[2] \wedge \cdots \wedge MO[2], F(BO_+, E_h))\\
&\simeq F(MO[2] \wedge \cdots \wedge MO[2] \wedge BO_+, E_h)\\
&\simeq \cdots \\
&\simeq F(BO_+ \wedge \cdots \wedge BO_+, E_h),
\end{align*}
and this is given by the Thom class $u_2 \wedge \cdots \wedge u_2$.  Pulling back this Thom class via the diagonal map $BO_+ \to BO_+ \wedge \cdots \wedge BO_+$ gives $u_2^k$, and it induces the Thom isomorphism
\[E_h^*(MO[2k]) \cong E_h^*(BO_+) \cdot u_2^k.\]

\begin{thm}\label{theorem:orientationC2m}
For any height $h$ and any finite subgroup $G\subset \mathbb{G}_h$, let $K=G\cap \mathbb{S}_h$, $H$ be a $2$-Sylow subgroup of $K$, and define $d= 2\cdot|K|\cdot |H|^{\frac{N_{h,H}-1}{2}}$. Then the $d$-fold direct sum of any real vector bundle is $E_h^{hG}$-orientable.
\end{thm}
\begin{proof}
It suffices to show that for the universal bundle $\gamma$ on $BO$, its $d$-fold direct sum $d \gamma$ is $E_h^{hG}$-orientable. To show this, we will first show that $d\gamma$ is $E_h^{hK}$-orientable. 

Let $u_2: MO[2]\to E_h$ be a Thom class for the bundle $2\gamma$.  For an element $g \in K$, define $gu_2: MO[2] \to E_h$ to be the composition
\[\begin{tikzcd} gu_2: MO[2] \ar[r,"u_2"]& E_h \ar[r,"g"]& E_h. \end{tikzcd}\]
Consider the composition 
\[u_K: MO[2\cdot|K|]\xrightarrow{\Delta}\overbrace{MO[2]\wedge \cdots\wedge MO[2]}^{|K|}\xrightarrow{g_1 u_2\wedge \cdots\wedge g_{|K|}u_2} \overbrace{E_h\wedge \cdots \wedge E_h}^{|K|}\xrightarrow{\mu}E_h,\]
where $g_1$, $g_2$, $\ldots$, $g_{|K|}$ are all the elements of the group $K$.  The map $u_K$ represents an element in $H^0(K, E_h^0(MO[2\cdot|K|]))$. 

For any $k \geq 1$, the class $u_K^k \in H^0(K, E_h^0(MO[2 \cdot |K| \cdot k]))$ is a Thom class for $MO[2 \cdot |K| \cdot k]$ and there is a Thom isomorphism 
\[\begin{tikzcd}E_h^*(BO_+) \ar[r,"\cdot u_K^k"] & E_h^*(MO[2 \cdot |K| \cdot k]).\end{tikzcd}\]
If for some $k$, the class $u_K^k$ is a permanent cycle in the homotopy fixed point spectral sequence for $(E_h^{hK})^*(MO[2 \cdot |K| \cdot k])$, then the map of spectral sequences 
\begin{equation}\label{eq:Thomsquare}
\begin{tikzcd}
H^*(K,E_h^*(BO_+)) \ar[r, "\cdot u_K^k"] \ar[d, Rightarrow] & H^*(K,E_h^*(MO[2 \cdot |K| \cdot k])) \ar[d, Rightarrow] \\ 
(E_h^{hK})^*(BO_+) \ar[r] & (E_h^{hK})^*(MO[2 \cdot |K| \cdot k])
\end{tikzcd}
\end{equation}
will induce an isomorphism 
\[(E_h^{hK})^*(BO_+)\cdot u_K^k\cong (E_h^{hK})^*(MO[2 \cdot |K| \cdot k])\]
on the $\E_\infty$-page by naturality.  Moreover, for any map $f: Y \to BO$, the pullback of the class $u_K^k$, $f^*(u_K^k)$ in $H^0(K, E_h^0(Mf^*(2\cdot|K|\cdot k \gamma)))$, will also be a permanent cycle by naturality:
\[\begin{tikzcd}
H^*(K,E_h^*(MO[2 \cdot |K| \cdot k])) \ar[r] \ar[d, Rightarrow] & H^*(K, E_h^*(Mf^*(2\cdot|K|\cdot k \gamma)))\ar[d, Rightarrow]\\
(E_h^{hK})^*(MO[2 \cdot |K| \cdot k]) \ar[r] & (E_h^{hK})^*(Mf^*(2\cdot|K|\cdot k \gamma)).
\end{tikzcd}\]
Therefore, it will also induce a Thom isomorphism on the $\E_\infty$-page of the homotopy fixed point spectral sequence for $(E_h^{hK})^*(Mf^*(2\cdot|K|\cdot k \gamma))$. 

It remains to find such a $k$ so that $u_K^k$ is a permanent cycle.  The splitting map 
\[E_h^*(MO[2 \cdot |K| \cdot k]) \longrightarrow E_h^*(\Pi MO) \longrightarrow E_h^*(MO[2 \cdot |K| \cdot k])\]
shows that the homotopy fixed point spectral sequence for $(E_h^{hK})^*(MO[2 \cdot |K| \cdot k])$ is a retract of the homotopy fixed point spectral sequence for $(E_h^{hK})^*(\Pi MO)$.  Therefore, the class $u_K^k$ is a permanent cycle in the homotopy fixed point spectral sequence for $(E_h^{hK})^*(MO[2 \cdot |K| \cdot k])$ if and only if it is a permanent cycle in the homotopy fixed point spectral sequence for $(E_h^{hK})^*(\Pi MO)$.  

By \cref{lem:multistructure}, multiplication in the homotopy fixed point spectral sequence for $(E_h^{hK})^*(\Pi MO)$ is commutative.  Furthermore, only differentials of odd lengths can occur due to degree reasons, and all the classes on the $\E_2$-page with positive filtrations are $|H|$-torsion.  Since this spectral sequence is a module over the homotopy fixed point spectral sequence for $(E_h^{hK})^*(S^0)$, it has a strong horizontal vanishing line of filtration $N_{h, K} = N_{h,H}$ by \cref{thm:vanishingallfinite}.  It follows that for $k = |H|^{\frac{N_{h,H}-1}{2}}$, the class $u_K^k$ must be a permanent cycle.  This shows that if we set 
\[d = 2\cdot|K|\cdot |H|^{\frac{N_{h,H}-1}{2}},\]
then the bundle $d\gamma$ is $E_h^{hK}$-orientable. 

To show that $d\gamma$ is also $E_h^{hG}$-orientable, note that $G/K$ can be viewed as a subgroup of the Galois group $\Gal(k/\mathbb{F}_2)$ through the inclusion $G\rightarrow \mathbb{G}_h$. Similar to the argument in the proof of \cref{theorem:TateVanishing}, the map of spectral sequences
\[
H^*(G,E_h^*(BO_+)) \xrightarrow{u_K^k} H^*(G,E_h^*(MO[2 \cdot |K| \cdot k])) 
\]
is a base change of the map of spectral sequences
\[
H^*(K,E_h^*(BO_+)) \xrightarrow{u_K^k} H^*(K,E_h^*(MO[2 \cdot |K| \cdot k])).
\]
Therefore, the class $u_K^k$ is also a permanent cycle in the homotopy fixed point spectral sequence for $(E_h^{hG})^*(MO[2\cdot|K|\cdot k])$. This finishes the proof of the theorem. 
\end{proof}

\begin{rem}\rm\label{rem:orientation}
\cref{theorem:orientationC2m} shows that $\Theta(h, G) \leq 2 \cdot |K| \cdot |H|^{\frac{N_{h, H}-1}{2}}$. It is worth noting that our bound is by no means optimal, as it is established without \textit{any} explicit computations of the homotopy fixed point spectral sequence. In contrast, Kitchloo and Wilson explicitly computed $(E_h^{hC_2})^*(BO(q))$ and established that the $2^{h+1}$-fold direct sum of any real vector bundle is $E_h^{hC_2}$-orientable \cite[Theroem~1.4]{KitchlooWilson}. In this case, our bound becomes $\Theta(h, C_2) \leq 2^{2^{h}+1}$.

Our primary goal in this section is to emphasize the existence of a concrete upper bound. It is important to highlight that our bound is derived based on the presence of a strong horizontal vanishing line of filtration $N_{h, H}$ and the fact that all classes on the $\E_2$-page with positive filtration are $|H|$-torsion. With more detailed computational knowledge of the homotopy fixed point spectral sequence for $(E_h^{hG})^*(\Pi MO)$, there is potential to obtain a significantly improved upper bound for $\Theta(h, G)$.
\end{rem}


\bibliographystyle{alpha}
\bibliography{ref}

\end{document}